\theoremstyle{plain}
  \newtheorem{theorem}{Theorem}[section]
  \newtheorem*{theorem*}{Theorem}
  \newtheorem{corollary}[theorem]{Corollary}
  \newtheorem*{corollary*}{Corollary}
  \newtheorem{lemma}[theorem]{Lemma}
  \newtheorem*{lemma*}{Lemma}
  \newtheorem*{proposition*}{Proposition}
\theoremstyle{definition}
  \newtheorem{algorithm}[theorem]{Algorithm}
  \newtheorem*{algorithm*}{Algorithm}
  \newtheorem*{assumption*}{Assumption}
  \newtheorem*{conjecture*}{Conjecture}
  \newtheorem*{definition*}{Definition}
  \newtheorem*{example*}{Example}
  \newtheorem*{hypothesis*}{Hypothesis}
  \newtheorem*{property*}{Property}
  \newtheorem{remark}[theorem]{Remark}
  \newtheorem*{remark*}{Remark}
\newcommand{\figperso}[2]{\begin{figure}[ht]\begin{center} #1
\\[-2em] \
\end{center}
#2\end{figure}}
\newcommand{\R}{\mathbb R}
\newcommand{\Sy}{\mathbb S}
\newcommand{\E}{\mathbb E}
\newcommand{\M}{\mathcal M}
\newcommand{\T}{\mathcal T}
\newcommand{\U}{\mathcal U}
\newcommand{\F}{\mathcal F}
\newcommand{\Q}{\mathcal Q}
\newcommand{\G}{\mathcal G}
\newcommand{\W}{\mathcal W}
\newcommand{\D}{\mathcal D}
\newcommand{\Po}{\mathcal P}
\newcommand{\Li}{\mathcal L}
\newcommand{\Ha}{\mathcal H}
\newcommand{\C}{\mathcal C}
\newcommand{\cor}{\mathop{\mathsf{Cor}}}
\newcommand{\tr}{\mathop{\mathrm{tr}}}
\newcommand{\tp}{^{\mathsf T}\,}
\newcommand{\card}[1]{\# #1}
\newcommand{\A}{\mathfrak A}
\newcommand{\NXW}{N_{\mathrm{rg}}}
\newcommand{\NX}{N_x}
\newcommand{\NW}{N_w}
\newcommand{\Nzero}{N_{\mathrm{in}}}
\newcommand{\eps}{\epsilon}
\newcommand{\NEW}[1]{{\em #1\/}\index{#1}}
\newcommand{\Msmall}{\overline{\M}}
\newcommand{\cardMs}{\card{\Msmall}}
\newcommand{\cardM}{\card{\M}}
\def\<#1,#2>{\langle #1, #2\rangle}
\title[A probabilistic max-plus scheme for solving HJB equations]{From a monotone probabilistic scheme to a probabilistic max-plus
algorithm for solving Hamilton-Jacobi-Bellman equations}
\author{Marianne Akian \and Eric Fodjo}
\address{M.~Akian: INRIA and CMAP, \'Ecole Polytechnique CNRS. Address:
CMAP, \'Ecole Polytechnique,
Route de Saclay,
91128 Palaiseau Cedex}
\email[M.~Akian]{Marianne.Akian@inria.fr}
\address{E.~Fodjo: I-Fihn Consulting and 
INRIA and CMAP, \'Ecole polytechnique CNRS. Address:
CMAP, \'Ecole Polytechnique,
Route de Saclay,
91128 Palaiseau Cedex}
\email[E.~Fodjo]{eric.fodjo@polytechnique.edu}
\keywords{Stochastic control, Hamilton-Jacobi-Bellman equations,
Max-plus numerical methods, Tropical methods, Probabilistic schemes.}
\subjclass[2010]{93E20,  49L20, 49M25, 65M75}
\thanks{The first author was partially supported by the ANR project MALTHY, ANR-13-INSE-0003,
by ICODE, and by PGMO, a joint program of EDF and FMJH (Fondation Math{\'e}matique Jacques Hadamard)}
\begin{document}

\begin{abstract}
In a previous work (Akian, Fodjo, 2016),
we introduced a lower complexity 
probabilistic max-plus numerical method for solving 
fully nonlinear Hamilton-Jacobi-Bellman equations
associated to diffusion control problems involving 
a finite set-valued (or switching) control and possibly 
a continuum-valued control.
This method was based on the idempotent
expansion properties obtained by McEneaney, Kaise and Han (2011) 
and on the numerical probabilistic method 
proposed by Fahim, Touzi and Warin (2011)
for solving some fully nonlinear parabolic partial differential
equations.
A difficulty of the algorithm of Fahim, Touzi and Warin is in the critical
constraints imposed on the Hamiltonian to ensure the monotonicity of the
scheme, hence the convergence of the algorithm.
Here, we propose a new ``probabilistic scheme'' which is monotone
under rather weak assumptions, including the case of 
strongly elliptic PDE with bounded coefficients.
This allows us to apply our probabilistic max-plus method
in more general situations.
We illustrate this on the 
evaluation  of the superhedging price of an option under uncertain 
correlation model
with several underlying stocks and changing sign cross gamma,
and consider in particular
the case of 5 stocks leading to a PDE in dimension 5.
\end{abstract}

\maketitle

\section{Introduction}

\label{sec-int}

We consider a finite horizon diffusion control problem on $\R^d$ involving 
at the same time a ``discrete'' control taking its values in a finite set $\M$, 
and a ``continuum'' control taking its values in some subset $\U$ 
of a finite dimensional space $\R^p$ (for instance a 
convex set with nonempty interior), which we next describe.

Let $T$ be the horizon. 
The state $\xi_s\in\R^d$ at time $s\in [0,T]$
satisfies the stochastic differential equation
\begin{equation}\label{defxi}
d \xi_s = f^{\mu_s} (\xi_s, u_s) ds + \sigma^{\mu_s} (\xi_s, u_s) d W_s
\enspace , \end{equation}
where $(W_s)_{s\geq 0}$ is a $d$-dimensional
Brownian motion on a filtered probability space
$(\Omega,\F,(\F_s)_{0\leq s\leq T},P)$.
The control processes $\mu:=(\mu_s)_{0\leq s\leq T}$ and $u:=(u_s)_{0\leq s\leq T}$
take their values in the
sets $\M$ and $\U$ respectively and they are admissible if
they are  progressively measurable with respect to
the filtration $(\F_s)_{0\leq s\leq T}$.
We assume that, for all $m\in\M$,
the maps $f^m: \R^d\times \U\to \R^d$ and
$\sigma^m: \R^d\times \U\to \R^{d\times d}$ are continuous
and satisfy properties implying the existence of the process 
$(\xi_s)_{0\leq s\leq T}$ for any admissible control processes $\mu$ and $u$.

Given an initial time $t\in [0,T]$, the control problem consists in maximizing
the following payoff:
\begin{align*}
J(t, x, \mu, u) :=& \E \left[ \int_t^T 
e^{ - \int_t^s \delta^{\mu_{\tau}} (\xi_{\tau}, u_{\tau}) d{\tau} }
\ell^{\mu_s} (\xi_s, u_s) ds \right.\\
& \left. \qquad + e^{ - \int_t^T \delta^{\mu_{\tau}} (\xi_{\tau}, u_{\tau}) d{\tau} }
\psi(\xi_T)  \mid \xi_t = x \right] \enspace ,
\end{align*}
where, for all $m\in \M$, $\ell^m: \R^d\times \U\to \R$,
$\delta^m: \R^d\times \U\to \R$,
and $\psi:\R^d\to\R$ are  given continuous maps.
We then define the value function of the problem as the optimal payoff:
$$v(t, x) = \sup_{\mu, u} J(t,x,\mu, u)\enspace ,$$
where the maximization holds over all admissible control processes
$\mu$ and $u$.

Let $\Sy_d$ denotes the set of symmetric $d\times d$ matrices
and let us denote by $\leq$ the Loewner order on $\Sy_d$
($A\leq B$ if $B-A$ is nonnegative).
The Hamiltonian $\Ha:\R^d\times \R \times \R^d\times \Sy_d\to \R$
of the above control problem is defined as:
\begin{subequations}\label{defH}
\begin{align}
\Ha(x, r,p,\Gamma):=&\max_{m\in \M} \Ha^m(x, r,p,\Gamma)\enspace,
\label{defHmax}\end{align}
with
\begin{align}
 \Ha^m(x, r,p,\Gamma):=&\max_{u\in \U}  \Ha^{m,u}(x, r,p,\Gamma) 
\enspace ,\label{defHm}\\
 \Ha^{m,u}(x, r,p,\Gamma):=&
\frac{1}{2} \tr\left(\sigma^m(x,u) \sigma^m(x,u)\tp \Gamma
\right)  +f^m(x,u)\cdot p\nonumber\\
&\qquad -\delta^m(x,u)r +\ell^m(x,u) \enspace.
\label{defHmu}
\end{align}
\end{subequations}

Under suitable assumptions, the value function 
$v:[0,T]\times \R^d\to \R$ is the unique 
(continuous) viscosity solution of
the following Hamilton-Jacobi-Bellman equation
\begin{subequations}\label{HJB}
\begin{align}
&-\frac{\partial v}{\partial t} 
-\Ha(x, v(t,x), Dv(t,x), D^2v(t,x))=0, 
\quad  x\in \R^d,\; t\in [0,T), \label{HJB1}\\
& v(T,x)=\psi(x), \quad x\in \R^d,
\end{align}
\end{subequations}
satisfying also some growth condition at infinity (in space).

In~\cite{touzi2011}, Fahim, Touzi and Warin proposed a probabilistic numerical 
method to solve such fully nonlinear partial 
differential equations~\eqref{HJB}, inspired by their backward stochastic
differential equation interpretation 
given by Cheridito, Soner, Touzi and Victoir
in~\cite{cheridito2007}.
In~\cite{touzi2011}, the convergence of the resulting algorithm
follows from the theorem of Barles and Souganidis~\cite{barles90},
which requires the monotonicity of the scheme.
Moreover, for this monotonicity to hold, critical
constraints are imposed on the Hamiltonian: 
the diffusion matrices $\sigma^m(x,u) \sigma^m(x,u)\tp$ need
at the same time to be bounded from below (with respect to the Loewner order)
by a symmetric positive definite matrix $a$ and 
bounded from above by $(1+2/d) a$.
Such a constraint can be restrictive, in particular it may not hold
even when the matrices  $\sigma^m(x,u)$ do not depend on $x$ and $u$ but take
different values for $m\in \M$. %
In~\cite{monotone-zhang2015}, Guo, Zhang and Zhuo proposed a monotone
scheme exploiting the diagonal part of the diffusion matrices 
and combining a usual finite
difference scheme to the scheme of~\cite{touzi2011}.
This new scheme can be applied in more general situations than the one 
of~\cite{touzi2011}, but still does not work for general 
control problems.

McEneaney, Kaise and Han
proposed in~\cite{mceneaney2010,mceneaney2011}
an idempotent numerical method which works at least when 
the Hamiltonians with fixed discrete control, $\Ha^m$,
correspond to linear quadratic control problems.
This method is based on the distributivity of the (usual) addition operation
over the supremum (or infimum) operation, and on a property
of invariance of the set of quadratic forms.
It computes in a backward manner the value function $v(t,\cdot)$
at time $t$ as a supremum of quadratic forms.
However, as $t$ decreases, the number of quadratic forms generated 
by the method increases exponentially (and even become infinite
if the Brownian is not discretized in space) and some pruning 
is necessary to reduce the complexity of the algorithm.

In~\cite{fodjo1}, we introduced an algorithm
combining  the  two above methods which uses in particular 
the simulation of as many uncontrolled stochastic processes as 
discrete controls. Moreover, we shown that
even without pruning, the complexity of the algorithm
is bounded polynomially in the number of discretization time steps
and the sampling size.

However, due to the above critical constraints imposed in~\cite{touzi2011},
the algorithm of~\cite{fodjo1} is difficult to apply in practical situations.
One way to avoid these critical constraints, is
as suggested in~\cite{fodjo1}, to introduce a large number of
Hamiltonians $\Ha^m$ such that each of them satisfy the constraints.
Since one need to simulate a stochastic process for each Hamiltonian,
this technique may increase the complexity unnecessarily.

Here, we propose a different probabilistic discretization of
the Hessian of the value function, which ensures the monotonicity 
of the scheme in rather general situations,
including the case of strongly elliptic PDE with bounded coefficients
and we show how the algorithm of~\cite{fodjo1} associated to the new scheme
can be applied in these situations and high dimension.

The paper is organized as follows.
In Section~\ref{sec-touzi}, we recall the scheme of~\cite{touzi2011}.
Then, the new monotone probabilistic discretization is presented in
Section~\ref{sec-monotone}.
In Section~\ref{sec-maxplus}, we recall the algorithm of~\cite{fodjo1}
and show how it can be combined with the scheme of
Section~\ref{sec-monotone}.
In Section~\ref{sec-illust}, we illustrate this algorithm numerically.
There, we consider the 
evaluation of the superhedging price of an option under uncertain 
correlation model
with several underlying stocks and changing sign cross gamma.
We consider in particular the case of 5 stocks
leading to a PDE in dimension 5.

\section{The probabilistic method of Fahim, Touzi and Warin}
\label{sec-touzi}

In the present section we recall the probabilistic numerical 
method of Fahim, Touzi and Warin proposed in~\cite{touzi2011}.
We begin with the general description and continue with an example
in order to illustrate the critical constraint.

\subsection{General description}

Let $h$ be a time discretization step such that $T/h$ is an integer.
We denote by $\T_h=\{0,h,2h,\ldots, T-h\}$ the set of discretization times
of $[0,T)$.
Let $\Ha$ be any Hamiltonian of the form~\eqref{defH}.
Let us decompose $\Ha$ as the sum $\Ha=\Li+\G$ of the (linear) generator 
$\Li$ of a given diffusion (with no control) and of
a nonlinear elliptic Hamiltonian $\G$. 
This means that 
\begin{align*}
 \Li(x, p,\Gamma):=&
\frac{1}{2} \tr\left(a(x)  \Gamma \right) +\underline{f}(x)\cdot p \enspace,
\end{align*}
with $a(x)=\underline{\sigma}(x) \underline{\sigma}(x)\tp$,
for some drift map $x\in\R^d\mapsto \underline{f}(x)\in \R^d$  and
standard deviation (volatility) map 
$x\in\R^d\mapsto \underline{\sigma}(x)\in\R^{d\times d}$.
This also means that the Hamiltonian $\G=\Ha-\Li$ satisfies the
ellipticity condition, that is
$\partial_{\Gamma} {\G}(x, r, p,\Gamma)$ 
is positive semidefinite, for all $x\in\R^d,\;
r\in\R, p\in \R^d, \Gamma\in \Sy_d$.
We shall also assume that $a(x)$ is positive definite (which implies 
that $\underline{\sigma}(x)$ is invertible). 
A typical example is obtained when $\Ha$ is uniformly 
strongly elliptic and $\Li(x, p,\Gamma)=
\frac{\epsilon}{2} \tr\left(  \Gamma \right) $
with $\epsilon$ small enough,
which corresponds to the generator $\frac{\epsilon}{2}\Delta v$.
Denote by $\hat{X}$
the Euler discretization of the diffusion with generator $\Li$:
\begin{equation}
\hat{X}(t+h)=
 \hat{X}(t)+ \underline{f}(\hat{X}(t))h+
\underline{\sigma}(\hat{X}(t)) (W_{t+h} -W_t)\enspace .
\label{xhat} \end{equation}
The time discretization of \eqref{HJB} proposed in~\cite{touzi2011} 
has the following form:
\begin{equation}\label{scheme}
 v^h(t,x)=T_{t,h}(v^h(t+h,\cdot))(x),\quad t\in\T_h\enspace,\end{equation}
with 
\begin{equation}
T_{t,h}(\phi)(x)= 
 \D_{t,h}^{0}(\phi)(x)
+ h \G(x, \D_{t,h}^{0}(\phi)(x),\D_{t,h}^{1}(\phi)(x),\D_{t,h}^{2}(\phi)(x))
\enspace .
\label{def-th-m}
\end{equation} 
In~\eqref{def-th-m}, $ \D_{t,h}^i(\phi)$, $i=0,1,2$, denotes the following 
approximation of the $i$th differential of $e^{h\Li}\phi$:
\begin{subequations}\label{defDcal}
\begin{align}
 \D_{t,h}^i(\phi)(x)&=\E(D^i\phi(\hat{X}(t+h))\mid \hat{X}(t)=x)\enspace ,
\end{align}
where $D^i$ denotes the $i$th differential operator.
Moreover, it is computed using the following scheme:
\begin{align}
 \D_{t,h}^i(\phi)(x)&=\E(\phi(\hat{X}(t+h))
\Po^i_{t,x,h}(W_{t+h}-W_t) \mid \hat{X}(t)=x)\label{defDcal2}
\enspace ,
\end{align}
\end{subequations}
where, for all $t,x,h,i$, $\Po^i_{t,x,h}$ is
the polynomial of degree $i$  in the variable $w\in \R^d$
given by:
\begin{subequations}\label{defpol}
\begin{align}
 \Po^0_{t,x,h}(w)&=1\enspace,\\ \Po^1_{t,x,h}(w)&=(\underline{\sigma}(x)\tp)^{-1}h^{-1} w\enspace,\\
 \Po^2_{t,x,h}(w)&=(\underline{\sigma}(x)\tp)^{-1} h^{-2}(w w\tp-h I)
(\underline{\sigma}(x))^{-1}  \label{defpol3}
\enspace ,
\end{align}
\end{subequations}
where $I$ is the $d\times d$ identity matrix.
Note that the equality between the two formulations in~\eqref{defDcal} holds 
for all $\phi$ with exponential
growth~\cite[Lemma 2.1]{touzi2011}.

In addition to the formal expression in~{\rm (\ref{def-th-m}-\ref{defpol})},
which can be compared to a standard numerical approximation (or more 
precisely to a time discretization),
the method of~\cite{touzi2011} includes the approximation of the 
conditional expectations in~\eqref{defDcal} by any probabilistic method such as
a regression estimator:
after a simulation of the processes $W_t$ and $\hat{X}(t)$,
one apply at each time $t\in\T_h$ a regression estimation to find the value
of $\D_{t,h}^i(v^h(t+h,\cdot))$ at the points $\hat{X}(t)$ by using
the values of  $v^h(t+h,\hat{X}(t+h))$ and $W_{t+h}-W_t$.
Hence, although in our setting the operator $T_{t,h}$ 
does not depend on $t$, since both the law of $W_{t+h}-W_t$ 
and the Hamiltonian $\Ha$ do not depend on $t$,
we shall keep the index $t$ in the above expressions to allow 
further approximations as above.

In~\cite{touzi2011}, the convergence of the time discretization 
scheme~\eqref{scheme} is 
proved by using the theorem of Barles and Souganidis of~\cite{barles90},
under the above assumptions together with the critical
assumption that
$\partial_{\Gamma} {\G}(x, r, p,\Gamma)$ is lower bounded by
some positive definite matrix (for all $x\in\R^d,\;
r\in\R, p\in \R^d, \Gamma\in \Sy_d$)
and that \sloppy
$\tr(a(x)^{-1} \partial_{\Gamma} {\G})\leq 1$.
Indeed, the latter conditions together with the boundedness of 
$\partial_{p} {\G}$ are used to show (in Lemma 3.12 and 3.14) that the operator
 $T_{t,h}$ is a $Ch$-almost monotone operator
over the set of Lipschitz continuous functions from $\R^d$ to $\R$,
where we shall say that an operator $T$ between any 
partially ordered sets $\F$ and $\F'$
of real valued functions (for instance $\R^n$, or the set of bounded functions 
from some set $\Omega$ to $\R$)
is \NEW{$L$-almost monotone}, for some constant $L\geq 0$, if 
\begin{equation}\label{amonotone}
\phi,\psi\in \F,\; 
 \phi\leq \psi \implies T(\phi)\leq T(\psi)+
L \sup(\psi-\phi)\enspace ,\end{equation}
and that it is \NEW{monotone}, when this holds for $L=0$.

Note that some other technical assumptions are used in~\cite{touzi2011},
such as the uniform Lipschitz continuity of the Hamiltonian $\Ha$, 
and the boundedness of 
the value function of the corresponding control problem,
which are less crucial, since they can be replaced by more 
usual stochastic control assumptions 
(like boundedness and Lipschitz continuity of the coefficients of
the controlled diffusion itself).

In~\cite{fodjo1}, we proposed to bypass the critical constraint, by
assuming that the Hamiltonians $\Ha^m$ (but not necessarily $\Ha$)
satisfy the critical constraint, 
and applying the above scheme to the Hamiltonians $\Ha^m$.
Another way is to replace the $\G$ part of the operator~\eqref{def-th-m} 
by any approximation of it in $O(h)$, for instance by using or combining
the probabilistic scheme with a finite difference scheme, as  is done
in~\cite{monotone-zhang2015}.
Indeed, the operator~\eqref{def-th-m} is already an
approximation of the semigroup of the HJB equation in time $h$ which
is at best in $O(h^2)$, therefore one can replace, with no loss of order of 
approximation, the $\D_{t,h}^i(\phi)$ inside $\G$ 
by $D^i \phi(x)$ (which is an approximation in $O(h)$)
or any approximation of order $O(h)$ of it.
Note however that the first $\D_{t,h}^0(\phi)$ in~\eqref{def-th-m}
 can only be replaced  by an approximation in $O(h^2)$ or at least in $o(h)$.
In Section~\ref{sec-monotone}, we shall propose an
approximation of  $\D_{t,h}^2(\phi)$ or $D^2 \phi(x)$
which is expressed as a conditional expectation as in~\eqref{defDcal2},
and leads to a monotone operator $T_{t,h}$ 
without assuming the critical constraint.
The advantage with respect to finite difference methods or with the method
of~\cite{monotone-zhang2015} is that it can still be used with simulations.
Before describing the new scheme, we shall compare on some examples
the method of~\cite{touzi2011} with finite difference schemes.

\subsection{Examples and comparison with finite difference schemes
constraints}
\label{remark-k}
Let us first show on examples the behavior of the discretization 
of~\cite{touzi2011}.
This should help to understand the advantage of the new discretization
that we propose in next section.
For this, we shall show what happen when the increments of the Brownian 
motion $W_{t+h}-W_t$
are replaced by any finite valued independent random variables with same law.
This allows one in particular to compare the discretization 
of~\cite{touzi2011} with finite difference schemes.
Similar comparisons were done  in~\cite{touzi2011} but here we shall 
discuss in addition the meaning of the critical constraint in this situation.

To simplify the comparison, consider the case where $\Ha$ is linear
and depends only on $\Gamma$, that is
\[ \Ha(x,r,p,\Gamma)= \frac{1}{2} \tr\left(A \Gamma \right)\]
where $A$ is a $d$-dimensional symmetric positive definite matrix.
We assume that $A\geq I$ and choose 
$\Li(x, p,\Gamma)=\frac{1}{2} \tr\left(\Gamma \right)$, that is
$\underline{f}\equiv 0$ and $\underline{\sigma}\equiv I$.
Hence, $\G(x,r,p,\Gamma)= \frac{1}{2} \tr\left((A-I) \Gamma \right)$.

Then, denoting by $N$ any $d$-dimensional normal random variable,
we get that the operator $T_{t,h}$ of~\eqref{def-th-m} satisfies:
\begin{align}
T_{t,h}(\phi)(x)&=  \D_{t,h}^{0}(\phi)(x)
+ h \frac{1}{2} \tr ((A-I)\D_{t,h}^{2}(\phi)(x)) )\nonumber\\
&=\E\left(\phi(x+\sqrt{h} N)
(1+\frac{1}{2} \tr ((A-I) (NN\tp -I)))\right)%
\enspace .
\label{ex-th}
\end{align}
This operator is linear, and it is thus monotone if and only if
for almost all values of $N$ the coefficient of $ \phi(x+\sqrt{h} N)$
inside the expectation, 
that is $(1+\frac{1}{2}\tr((A-I) (NN\tp-I)))$,
is nonnegative.
The critical constraint  $\tr(a(x)^{-1} \partial_{\Gamma} {\G})\leq 1$
is equivalent here to $\frac{1}{2}\tr(A-I)\leq 1$.
This corresponds exactly to the condition that the coefficient of  $ \phi(x)$ 
inside the expectation is nonnegative.
Thus, if $N$ is replaced by any random variable taking a finite number of values
including $0$, the critical constraint is necessary.

Consider the dimension $d=1$ and a simple discretization of $N$
by the random variable taking the values $\pm \nu$ with probability
 $1/(2\nu^2)$ and the value $0$ with probability $1-1/\nu^2$, where $\nu>1$.
Then, we obtain 
\begin{equation}\label{disc-dim1}
 T_{t,h}(\phi)(x)=\phi(x)+ 
\frac{b}{2\nu^2} 
\left(\phi(x+\sqrt{h}\nu )+\phi(x-\sqrt{h} \nu)-2 \phi(x)\right)\enspace ,
\end{equation}
with $b=1+\frac{1}{2} (A_{11}-1)(\nu^2-1)$.
This scheme is equivalent to an  explicit finite difference
discretization of~\eqref{HJB}
with a space step $\Delta x=\sqrt{h}\nu$. However it
is consistent with the Hamilton-Jacobi-Bellman
equation~\eqref{HJB} if and only if $b=A_{11}$ and so if
and only if $\nu=\sqrt{3}$.
In that case, the critical
condition $\frac{1}{2}(A_{11}-1)\leq 1$ is necessary for the
scheme to be monotone and it is equivalent to the
CFL condition $A_{11}h\leq (\Delta x)^2$.

For finite difference schemes, the CFL condition can be satisfied by
increasing $\Delta x$.
However, here $\Delta x$ is strongly connected to the possible values of $N$
and since the probability of large $N$ is small, one cannot 
avoid the critical constraint if we keep the
discretization~\eqref{defDcal2} of $\D_{t,h}^2(\phi)(x)$.

Let us consider now the same example in dimension $2$.
In that case, the
usual difficulty of finite difference schemes
is in the monotone discretization of mixed derivatives.
This can be solved for instance when 
the matrix $A$ is diagonally dominant by using only close points to the
initial point of the grid, that is  using
the 9-points stencil, see~\cite{kushner}, or in general
by using points of the grid which are far from the initial point 
(see for instance~\cite{bonnans,mirebeau}).
Here, we shall see that this difficulty is hidden by the critical
constraint $\frac{1}{2}\tr(A-I)\leq 1$, which implies in particular
that the matrix $A$ is diagonally dominant.

Indeed, consider the simple discretization of $N$ where
each entry of $N=(N_i)_{i=1,\ldots, 2}$ is replaced by a 
random variable as above, taking the values $\pm \sqrt{3}$ with probability
 $1/6$ and the value $0$ with probability $2/3$.
In that case, the critical constraint 
$\frac{1}{2}(A_{11}+A_{22}-2)\leq 1$ is necessary and sufficient for the
discretization to be monotone.
We have 
\begin{align*}
T_{t,h}(\phi)(x)=&\E\left(\phi(x+\sqrt{h} N)
(1+\frac{1}{2} \sum_{i,j=1}^2 (A_{ij}-\delta_{ij}) (N_iN_j -\delta_{ij}))\right)\\
=& \phi(x)\frac{2}{9} (2- \tr(A-I))\\
& +\frac{1}{18}\sum_{i=1}^2\sum_{\epsilon=\pm 1}\left(
\phi(x+\sqrt{3h} \epsilon e_i) ( 3 (A_{ii}-1)+2- \tr(A-I))
\right)\\
&+ \frac{1}{72} \sum_{\epsilon_1=\pm 1,\epsilon_2=\pm 1}\Big(
\phi(x+\sqrt{3h} (\epsilon_1 e_1+\epsilon_2 e_2))\\
&\qquad \Big(3 \Big(\sum_{i,j=1}^2(A_{ij}-\delta_{ij})\epsilon_i\epsilon_j\Big)
+2- \tr(A-I)\Big) \Big) \enspace .
\end{align*}
where $(e_1,e_2)$ is the canonical basis of $\R^2$.
This discretization can be rewritten as
\begin{align*}
T_{t,h}(\phi)(x)=&\phi(x)+ 
\frac{h}{2}
\left(\left(\sum_{i,j=1}^2 (A_{ij}-\delta_{ij} b) D_{ij}^h \phi(x)\right)
+ b \Delta^h \phi(x)\right) \enspace ,
\end{align*}
where $b= (1+ \tr(A-I))/3$, 
$ D_{ij}^h \phi$ is the standard $5$-point stencil discretization
of the partial derivative $\frac{\partial^2\phi}{\partial x_i\partial x_j}$
on the grid with space step $\Delta x=\sqrt{3h}$ (as above),
and $\Delta^h \phi$ is the discretization of $\Delta \phi$ using 
the external vertices of the $9$-point stencil (that is the points 
$x+\Delta x (\pm e_1+\pm e_2)$).
Note also that the critical constraint $\tr(A-I)\leq 2$ 
implies $b\leq 1$. Moreover, since $A-I$ is positive semidefinite,
then $2|A_{12}|\leq \tr(A-I)\leq 2$ and $A_{ii}\geq 1$, 
so $|A_{12}|\leq A_{ii}$ for $i=1,2$. The latter condition 
means that the matrix is diagonally dominant.
So, in dimension $2$, the critical constraint implies 
automatically that the equation can be discretized using a 
9-points stencil finite difference monotone scheme.

Hence, the difficulty of the probabilistic scheme does not come only or
necessarily from mixed derivatives as for finite difference schemes.
It essentially comes from the weights of the possible values of $N$,
which link strongly the possible space discretization 
and time discretization steps.
The new approximation of  $\D_{t,h}^2(\phi)$ that we propose in next section
will allows one to change these weights,
while keeping the probabilistic interpretation.

\section{A monotone probabilistic scheme for fully nonlinear PDEs}
\label{sec-monotone}

We denote by $\C^{k}$ the set of functions from $[0,T]\times\R^d$ to $\R$ 
with continuous partial derivatives up to order $k$ in $t$ and $x$,
and by $\C^{k}_{\text{b}}$ the subset of functions with bounded
such derivatives.

\begin{theorem}\label{theo2} 
Let $v\in\C^{4}_{\text{b}}$, $\hat{X}$ as in~\eqref{xhat},
$\Sigma\in \R^{d\times \ell}$ for some $\ell$ and denote
$A=\Sigma \Sigma\tp$.
Let $N$ be a one dimensional normal random variable.
For a nonnegative integer $k$, consider the polynomial 
$\Po_{\Sigma,k}$ of degree $4k+2$ in the variable $w\in\R^d$ defined by:
\begin{subequations}\label{defpoly2k}
\begin{gather}
 \Po_{\Sigma,k}(w)= c_k\sum_{j=1}^{\ell}([\Sigma\tp w]_{j})^{4k+2}\|\Sigma_{.j}\|_2^{-4k}-K \end{gather}
where, for any real vector $v\in\R^d$ and $j\leq d$, $[v]_j$ denotes 
the $j$th coordinate  of $v$, for 
any matrix $\Sigma\in \R^{d\times \ell}$ and $j\leq \ell$,
$\Sigma_{.j}$ denotes the $j$th column of $\Sigma$, and
\begin{gather}
c_k= \frac{1}{\E\left[N^{4k+4}-N^{4k+2}\right]}\enspace ,\qquad
K:=\frac{\tr(A)}{4k+2}=\frac{\sum_{j=1}^{\ell}\|\Sigma_{.j}\|_2^{2}}{4k +2}
\enspace .
\end{gather}
\end{subequations}
We have, for $(t,x)\in [0,T]\times \R^d$:
\begin{gather}
\E\left[\Po_{\Sigma,k}(h^{-1/2}(W_{t+h}-W_t))
 \mid \hat{X}(t)=x\right] = 0 \label{theo2-1}\\
h^{-1}\E\left[v(t+h,\hat{X}(t+h)) \Po_{\Sigma,k}(h^{-1/2}(W_{t+h}-W_t))
 \mid \hat{X}(t)=x\right]=\nonumber  \\
\frac{1}{2}\tr( \underline{\sigma}(x)\Sigma\Sigma\tp\underline{\sigma}\tp(x)\, D^2v(t,x))+O(h) \enspace , 
\label{theo2-2}
\end{gather}
where the error $O(h)$ is uniform in $t$ and $x$.
\end{theorem}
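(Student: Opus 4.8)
The plan is to reduce both identities to elementary Gaussian computations, the crux being an \emph{exact} matrix identity satisfied by the polynomial $\Po_{\Sigma,k}$. I would first fix notation: conditionally on $\hat X(t)=x$ the increment $W_{t+h}-W_t$ is $\mathcal N(0,hI)$ and independent of $\hat X(t)$, so the rescaled increment $w:=h^{-1/2}(W_{t+h}-W_t)$ is standard normal in $\R^d$ and $\hat X(t+h)=x+\Delta$ with $\Delta:=h\,\underline f(x)+\sqrt h\,\underline\sigma(x)\,w$. Two facts about the standard Gaussian are used repeatedly: for any $s\in\R^d$ one has $s\tp w\sim\mathcal N(0,\|s\|_2^2)$, hence $\E\big[([\Sigma\tp w]_j)^{2p}\big]=\|\Sigma_{.j}\|_2^{2p}\,\E[N^{2p}]$; and the moment recursion $\E[N^{2p+2}]=(2p+1)\,\E[N^{2p}]$ (Stein's identity, or integration by parts). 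Applying the recursion with $2p=4k+2$ yields precisely
\[ c_k\,\E[N^{4k+2}]=\frac{\E[N^{4k+2}]}{\E[N^{4k+4}]-\E[N^{4k+2}]}=\frac{1}{4k+2}=\frac{K}{\tr(A)}\enspace, \]
the algebraic identity behind the whole construction. With it, \eqref{theo2-1} is immediate, since $\sum_{j}\|\Sigma_{.j}\|_2^2=\tr(A)$ gives $\E[\Po_{\Sigma,k}(w)]=c_k\,\E[N^{4k+2}]\,\tr(A)-K=0$.

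The main step is the matrix identity $\E\big[w\,w\tp\,\Po_{\Sigma,k}(w)\big]=A$, equivalently $\E\big[(w\tp M w)\,\Po_{\Sigma,k}(w)\big]=\tr(AM)$ for every $M\in\Sy_d$. For a single column $s=\Sigma_{.j}$ with $\rho=\|s\|_2$, I would decompose $w=Z\hat s+w_\perp$, where $\hat s=s/\rho$, $Z=\hat s\tp w\sim\mathcal N(0,1)$ is independent of $w_\perp\sim\mathcal N(0,I-\hat s\hat s\tp)$. Then $(s\tp w)^{4k+2}=\rho^{4k+2}Z^{4k+2}$, the cross terms vanish by independence and $\E w_\perp=0$, and the moment recursion gives $\E\big[w\,w\tp(s\tp w)^{4k+2}\big]=\E[N^{4k+2}]\big(\rho^{4k+2}I+(4k+2)\,\rho^{4k}\,s\,s\tp\big)$. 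Summing over $j$ against the weights $c_k\|\Sigma_{.j}\|_2^{-4k}$ and using $\sum_j s_js_j\tp=A$, $\sum_j\|\Sigma_{.j}\|_2^2=\tr(A)$ and $c_k\,\E[N^{4k+2}]=1/(4k+2)$, one gets $\E\big[w\,w\tp(\Po_{\Sigma,k}(w)+K)\big]=\tfrac{\tr(A)}{4k+2}\,I+A=KI+A$; subtracting $K\,\E[w\,w\tp]=KI$ yields the identity, and then $\E[(w\tp M w)\Po_{\Sigma,k}(w)]=\tr\big(M\,\E[w\,w\tp\Po_{\Sigma,k}(w)]\big)=\tr(AM)$.

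For \eqref{theo2-2} I would use $v\in\C^{4}_{\text{b}}$ to Taylor-expand $v(t+h,x+\Delta)$ to third order in the space variable around $x$ (at time $t+h$), with fourth-order remainder $R$ satisfying $|R|\le C_d\,\|\Delta\|_2^4\max_{|\alpha|=4}\|\partial_x^\alpha v\|_\infty$, multiply by $\Po_{\Sigma,k}(w)$, and take the conditional expectation. The order-$0$ term vanishes by \eqref{theo2-1}; the order-$1$ term vanishes because $\E[\Delta\,\Po_{\Sigma,k}(w)]=h\,\underline f(x)\,\E[\Po_{\Sigma,k}(w)]+\sqrt h\,\underline\sigma(x)\,\E[w\,\Po_{\Sigma,k}(w)]=0$, the second expectation being that of an odd function of $w$ since $\Po_{\Sigma,k}$ is even; in the order-$2$ term, expanding $\tfrac12\Delta\tp D^2v(t+h,x)\Delta$, the $O(h^2)$ and $O(h^{3/2})$ pieces vanish for the same two reasons, leaving $\tfrac h2\,\E\big[(w\tp\underline\sigma(x)\tp D^2v(t+h,x)\underline\sigma(x)w)\,\Po_{\Sigma,k}(w)\big]=\tfrac h2\,\tr\big(A\,\underline\sigma(x)\tp D^2v(t+h,x)\,\underline\sigma(x)\big)$ by the matrix identity; in the order-$3$ term the $O(h^{3/2})$ and $O(h^{5/2})$ pieces are odd in $w$ and vanish, while the $O(h^2)$ piece, as well as $\E[R\,\Po_{\Sigma,k}(w)]$, are $O(h^2)$ uniformly (using boundedness of the coefficients $\underline f,\underline\sigma$ and of the derivatives of $v$, together with $\E[\|w\|_2^p\,|\Po_{\Sigma,k}(w)|]<\infty$). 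Finally I would replace $D^2v(t+h,x)$ by $D^2v(t,x)$ at the cost of $O(h)$ (since $\partial_t D^2v$ is bounded), rewrite $\tr\big(A\,\underline\sigma(x)\tp D^2v(t,x)\,\underline\sigma(x)\big)=\tr\big(\underline\sigma(x)\Sigma\Sigma\tp\underline\sigma\tp(x)\,D^2v(t,x)\big)$ by cyclicity of the trace, and divide by $h$.

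I expect the main obstacle to be twofold: pinning down the exact matrix identity $\E[w\,w\tp\Po_{\Sigma,k}(w)]=A$, which is precisely where the chosen values of $c_k$ and $K$ are consumed, and the parity bookkeeping in the Taylor expansion. It is essential that the odd-in-$w$ contributions, which carry the half-integer powers $h^{1/2}$ and $h^{3/2}$, all vanish — in particular the leading $O(h^{3/2})$ term of the third-order expansion; this parity cancellation, rather than anything coming from the constant $K$ alone, is what produces the $O(h)$ bound and is the reason $\C^{4}$ regularity is imposed.
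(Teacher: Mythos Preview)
Your proof is correct and follows essentially the same route as the paper's sketch: linearity in $\Sigma\Sigma\tp$ reduces to single columns, rotational invariance of the Gaussian handles each column (the paper phrases this via a unitary change of basis, you via the orthogonal split $w=Z\hat s+w_\perp$, which is the same device), and a Taylor expansion together with Gaussian moments gives the result. Your version is in fact more careful than the sketch: packaging the key computation as the matrix identity $\E[ww\tp\Po_{\Sigma,k}(w)]=A$ is clean and reusable, and your explicit parity bookkeeping --- showing that the $h^{1/2}$ and $h^{3/2}$ contributions vanish because $\Po_{\Sigma,k}$ is even --- makes precise why $\C^4$ regularity (rather than $\C^2$ or $\C^3$) is needed for the uniform $O(h)$ error, a point the paper's ``Taylor expansion up to order~$2$'' leaves implicit.
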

\begin{proof}[Sketch of proof]
Since $\Sigma\Sigma\tp=\sum_{j=1}^{\ell} \Sigma_{.j} \Sigma_{.j}\tp$, it is
sufficient to show~\eqref{theo2-2} when $A=\Sigma_{.j} \Sigma_{.j}\tp$
for  each $j=1,\ldots, \ell$.
In that case, with $j$ fixed, using a unitary matrix $U$ with 
$j$th column equal to $ \Sigma_{.j}/\|\Sigma_{.j}\|_2$, 
we obtain that~(\ref{theo2-1}-\ref{theo2-2}) are equivalent to the same 
equations for $A=U_{.j} U_{.j}\tp$.
Since $U$ is a unitary matrix,
$U\tp (h^{-1/2}(W_{t+h}-W_t))$ is a $d$-dimensional normal random vector,
and in particular its $j$th coordinate $[U\tp (h^{-1/2}(W_{t+h}-W_t))]_j$ is 
a normal random variable.
This implies in particular~\eqref{theo2-1}.
Applying a Taylor expansion of $v$ around $(t,x)$ up to order
$2$ and using the values of the moments of any $d$-dimensional normal 
random vector, we deduce~\eqref{theo2-2}.
\end{proof}
Since the above approximation depends on the matrix to which 
the second derivative is multiplied, we cannot apply it directly
as an argument of $\G$ as in~\eqref{def-th-m}, but need instead
to use the expression of $\G$ as a supremum of Hamiltonians
which are affine with respect to $r,p,\Gamma$ and apply~\eqref{theo2-2} to each
of these Hamiltonians.
In what follows, we shall present a scheme which combine at the same time
this idea and the one of~\cite{fodjo1}.
Note that the decomposition of the matrix involved in the expression
of the second order terms of the Hamiltonians 
as the product $A=\Sigma \Sigma\tp$ is used in a similar way to obtain
general monotone 
finite difference discretizations (see for instance~\cite{mirebeau}).

Let us decompose the Hamiltonian $\Ha^{m,u}$
of~\eqref{defHmu} as $\Ha^{m,u}=\Li^{m}+\G^{m,u}$ with 
\begin{align*}
 \Li^{m}(x, p,\Gamma):=&
\frac{1}{2} \tr\left(a^{m}(x)  \Gamma \right) +\underline{f}^{m}(x)\cdot p \enspace,
\end{align*}
and $a^{m}(x)=\underline{\sigma}^{m}(x) \underline{\sigma}^{m}(x)\tp$,
and denote by $\hat{X}^m$
the Euler discretization of the diffusion with generator $\Li^m$.
Note that, we can also choose a linear operator $\Li^{m}$ depending on $u$,
but this would increase too much the number of simulations.
We can also choose the same linear operator $\Li^{m}$ for different
values of $m$, which is the case in Algorithm~\ref{algo1} below.
Assume that $a^{m}(x)$ is positive definite (so that $\underline{\sigma}^{m}(x)$ is invertible) and that
$a^{m}(x)\leq \sigma^m(x,u) \sigma^m(x,u)\tp$, for all 
$x\in\R^d,\; u\in \U$, and denote by $\Sigma^m(x,u)$ any $d\times \ell$ matrix 
such that
\begin{equation}\label{assump1}
\sigma^m(x,u) \sigma^m(x,u)\tp -a^{m}(x)= \underline{\sigma}^{m}(x) \Sigma^m(x,u)\Sigma^m(x,u)\tp\underline{\sigma}^{m}(x)\tp
\enspace .
\end{equation}
Such a matrix $\Sigma^m(x,u)$ exists under the above assumptions since 
$ \underline{\sigma}^{m}(x)$ is invertible,
and $\underline{\sigma}^{m}(x)^{-1}(\sigma^m(x,u) \sigma^m(x,u)\tp -a^{m}(x)) (\underline{\sigma}^{m}(x))\tp)^{-1}$  is
a symmetric nonnegative matrix.
Indeed, one can choose $\Sigma^m(x,u)$ as the square root of
the latter matrix.
One may also use its Cholesky factorization 
in which zero columns are eliminated: this leads to 
a rectangular and triangular matrix $\Sigma^m(x,u)$
of size $d\times \ell$, where $\ell$ is the rank of the factorized matrix.
This is what we use in the practical implementation of Algorithm~\ref{algo}
below.

Define
\begin{equation}
\G^{m,u}_1(x, r,p):= (f^m(x,u)-\underline{f}^{m}(x)) \cdot p 
-\delta^m(x,u)r +\ell^m(x,u) \enspace,
\end{equation}
so that 
\[ 
\G^{m,u}(x, r,p,\Gamma)
= \G^{m,u}_1(x, r,p)+\frac{1}{2} \tr\left(\underline{\sigma}^{m}(x) \Sigma^m(x,u) \Sigma^m(x,u)\tp \underline{\sigma}^{m}(x)\tp\Gamma\right)\enspace .\]

Applying~\eqref{theo2-2} and~\eqref{defDcal}, we deduce the following result.

\begin{corollary}\label{cor-const}
Let $ \D_{t,h,m}^i(\phi)$, $i=0,1$,  be given by~{\rm (\ref{defDcal}-\ref{defpol})}, with $\underline{\sigma}^m$ 
and $\hat{X}^m$ instead of  $\underline{\sigma}$ and  $\hat{X}$ respectively.
Let $ \D_{t,h,m,\Sigma,k}^2(\phi)$ be defined as:
\[ \D_{t,h,m,\Sigma,k}^2(\phi)(x):=
h^{-1}\E\left[\phi(\hat{X}^m(t+h)) \Po_{\Sigma,k}(h^{-1/2}(W_{t+h}-W_t))
 \mid \hat{X}^m(t)=x\right]\]
with  $\Po_{\Sigma,k}$ as in~\eqref{defpoly2k}.

Consider the operator:
\begin{gather*}
T_{t,h}(\phi)(x)=   \max_{m\in\ M} \big\{ \D_{t,h,m}^{0}(\phi)(x)\\
+ h \max_{u\in \U}\big(
\G_1^{m,u}(x, \D_{t,h,m}^{0}(\phi)(x),\D_{t,h,m}^{1}(\phi)(x))
+  \D_{t,h,m,\Sigma^m(x,u),k}^2(\phi)(x)\big)\big\}
\enspace .
\end{gather*}
Assume that the maps $\sigma^m$, $f^m$ and $\Sigma^m$ are bounded with respect
to $u\in \U$.
Then, for $v\in\C^{4}_{\text{b}}$, $t\in\T_h$, and $x\in \R^d$, we have
\[ \frac{T_{t,h}(v(t+h,\cdot))(x)-v(t,x)}{h}=
\frac{\partial v}{\partial t} 
+\Ha(x, v(t,x), Dv(t,x), D^2v(t,x))+O(h)\enspace .\]
\end{corollary}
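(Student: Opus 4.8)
The plan is to reduce the corollary to a direct application of the earlier results via the expansion of $\Ha$ as a double maximum over affine Hamiltonians. First I would fix $v\in\C^4_{\text{b}}$, $t\in\T_h$ and $x\in\R^d$, and expand $T_{t,h}(v(t+h,\cdot))(x)$ according to its definition as $\max_{m\in\M}\max_{u\in\U}$ of the inner expression. For each fixed pair $(m,u)$ I would treat the three pieces of the inner expression separately. The term $\D^0_{t,h,m}(v(t+h,\cdot))(x)$ is, by~\eqref{defDcal} applied with $\underline\sigma^m$ and $\hat X^m$, equal to $\E(v(t+h,\hat X^m(t+h))\mid \hat X^m(t)=x)$, i.e.\ the action of the approximate semigroup $e^{h\Li^m}$ on $v(t+h,\cdot)$; a Taylor expansion of $v$ at $(t,x)$ up to second order in $x$ combined with $v(t+h,\cdot)=v(t,\cdot)+h\,\partial_t v(t,\cdot)+O(h^2)$ and the moments of the Gaussian increment gives
\[
\D^0_{t,h,m}(v(t+h,\cdot))(x)=v(t,x)+h\Big(\partial_t v(t,x)+\Li^m(x,Dv(t,x),D^2v(t,x))\Big)+O(h^2)\enspace,
\]
uniformly in $t,x$ (and in $m$, since there are finitely many discrete controls). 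Similarly $\D^1_{t,h,m}(v(t+h,\cdot))(x)=Dv(t,x)+O(h)$ and $\D^0_{t,h,m}(v(t+h,\cdot))(x)=v(t,x)+O(h)$, both uniform.

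Next I would handle the genuinely new ingredient, $\D^2_{t,h,m,\Sigma^m(x,u),k}(v(t+h,\cdot))(x)$. This is exactly the left-hand side of~\eqref{theo2-2} in Theorem~\ref{theo2}, but with $v(t+h,\cdot)$ in place of $v(t,\cdot)$, with $\underline\sigma^m,\hat X^m$ in place of $\underline\sigma,\hat X$, and with $\Sigma=\Sigma^m(x,u)$. Writing $v(t+h,\cdot)=v(t,\cdot)+O(h)$ and using~\eqref{theo2-1} to absorb the discrepancy (the polynomial $\Po_{\Sigma,k}$ has zero conditional expectation, so replacing the function by one differing by $O(h)$ changes the weighted expectation by $O(h)$), Theorem~\ref{theo2} yields
\[
\D^2_{t,h,m,\Sigma^m(x,u),k}(v(t+h,\cdot))(x)=\tfrac12\tr\big(\underline\sigma^m(x)\Sigma^m(x,u)\Sigma^m(x,u)\tp\underline\sigma^m(x)\tp\,D^2v(t,x)\big)+O(h)\enspace,
\]
and by~\eqref{assump1} the matrix inside the trace is $\sigma^m(x,u)\sigma^m(x,u)\tp-a^m(x)$. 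The uniformity of this $O(h)$ over $u$ follows from the assumed boundedness of $\sigma^m$, $f^m$ and $\Sigma^m$ with respect to $u$, since the $O(h)$ constant in Theorem~\ref{theo2} depends on $\Sigma$ only through a bound on its entries (and on $\|v\|_{\C^4_{\text b}}$). Adding the three pieces, the inner expression for fixed $(m,u)$ equals
\[
v(t,x)+h\Big(\partial_t v(t,x)+\Li^m(x,Dv(t,x),D^2v(t,x))+\G_1^{m,u}(x,v(t,x),Dv(t,x))+\tfrac12\tr\big((\sigma^m\sigma^{m\,\mathsf T}-a^m)D^2v\big)\Big)+O(h^2)\enspace,
\]
and by the decomposition $\Li^m+\G^{m,u}_1+\tfrac12\tr((\sigma^m\sigma^{m\,\mathsf T}-a^m)\,\cdot\,)=\Ha^{m,u}$ recorded just before the corollary, the bracket is precisely $\partial_t v(t,x)+\Ha^{m,u}(x,v(t,x),Dv(t,x),D^2v(t,x))$.

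Finally I would pass to the maxima. Since the $O(h^2)$ remainder is uniform in $u\in\U$, taking $\sup_{u\in\U}$ commutes with it up to the same $O(h^2)$, and using~\eqref{defHm} gives $\D^0_{t,h,m}+h\sup_u(\cdots)=v(t,x)+h(\partial_t v+\Ha^m(x,v,Dv,D^2v))+O(h^2)$; then, as $\M$ is finite, taking $\max_{m\in\M}$ and invoking~\eqref{defHmax} yields $T_{t,h}(v(t+h,\cdot))(x)=v(t,x)+h(\partial_t v(t,x)+\Ha(x,v(t,x),Dv(t,x),D^2v(t,x)))+O(h^2)$. Dividing by $h$ gives the claim, with the error genuinely $O(h)$ and uniform in $(t,x)$ on account of $v\in\C^4_{\text b}$ and the boundedness assumptions. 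The main obstacle is bookkeeping the uniformity of the various $O(h)$ and $O(h^2)$ terms: one must check that the Taylor remainders for $\D^0$ and the error term from Theorem~\ref{theo2} are controlled uniformly over $m\in\M$ (easy, finitely many) and, more delicately, over $u\in\U$, which is exactly where the hypothesis that $\sigma^m$, $f^m$ and $\Sigma^m$ are bounded in $u$ enters; everything else is routine Gaussian-moment computation already carried out in the proof of Theorem~\ref{theo2}.
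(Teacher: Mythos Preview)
Your proposal is correct and follows essentially the same approach as the paper, which does not give a proof but simply states (in the sentence preceding the corollary) that the result follows by applying~\eqref{theo2-2} and~\eqref{defDcal}. One minor simplification: the detour through $v(t+h,\cdot)=v(t,\cdot)+O(h)$ for the $\D^2$ term is unnecessary, since Theorem~\ref{theo2} already has $v(t+h,\hat X(t+h))$ on the left-hand side of~\eqref{theo2-2} and $D^2v(t,x)$ on the right, so it applies directly to $\D^2_{t,h,m,\Sigma^m(x,u),k}(v(t+h,\cdot))(x)$.
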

This result shows the consistency of the scheme~\eqref{scheme}
in the sense of~\cite{barles90}.
This implies easily that if the solution $v$ of~\eqref{HJB} is smooth enough, 
then the solution of~\eqref{scheme} converges to $v$ when $h$ goes to zero.
In the general case, when $v$ is only Lipschitz continuous for instance,
the convergence is obtained by the theorem of
Barles and Souganidis~\cite{barles90}. For this, 
one need to satisfy also the other assumptions of the theorem,
that we shall now show.

Note that when $k=0$, and $\Li^m=\Li$ does not
depend on $m$, the above operator $T_{t,h}$ coincides with the operator~\eqref{def-th-m} proposed in~\cite{touzi2011},
since $\D_{t,h,m,\Sigma,0}^2(\phi)(x)= 
\frac{1}{2}\tr( \underline{\sigma}(x)\Sigma\Sigma\tp\underline{\sigma}\tp(x)\, 
 \D_{t,h}^2(\phi)(x))$.
In~\cite[Lemma 3.12]{touzi2011}, the monotonicity of the scheme is
proved under the
critical constraint that $\tr(a(x)^{-1} \partial_{\Gamma} {\G})\leq 1$.
This constraint is equivalent to the condition that 
$\frac{1}{2}\tr(\Sigma^m(x,u) \Sigma^m(x,u)\tp)\leq 1$ for all $x\in\R^d$
and all useful controls $m\in\M$ and $u\in \U$ that are optimal 
in the expression of $\Ha$,
 which means that the constant $K$ in Theorem~\ref{theo2} is $\leq 1$
for $k=0$.
By increasing $k$, we can obtain that this constant $K$ is $\leq 1$ 
in more general situations, which implies the following
monotonicity result.

\begin{theorem}\label{theo-monotone}
Let $T_{t,h}$ be as in Corollary~\ref{cor-const}.
Assume that the map $\tr(\Sigma^m(x,u)\Sigma^m(x,u)\tp)$ is upper bounded 
in $x$ and $u$ and let $\bar{a}$ be  an upper bound.
Assume also that $\delta^m$ is upper bounded, 
and that there exists a bounded map $g^m$ (in $x$ and $u$) such that
$f^m(x,u)-\underline{f}^{m}(x)=\underline{\sigma}(x) \Sigma^m(x,u)g^m(x,u)$.
Then, for $k$ such that $\bar{a}< 4k+2$,
there exists $h_0$ such that
$T_{t,h}$ is monotone for $h\leq h_0$
over the set of bounded continuous functions $\R^d\to\R$,
and there exists $C>0$ such that
$T_{t,h}$ is $Ch$-almost monotone for all $h>0$.
\end{theorem}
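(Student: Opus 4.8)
The plan is to exploit that $T_{t,h}$ is a supremum of operators that are \emph{affine} in $\phi$, and to reduce both assertions to a single almost-sure lower bound on an explicit kernel. Write $\Delta W:=W_{t+h}-W_t$ and, for fixed $m\in\M$, $u\in\U$, set
\[ A_{m,u}(\phi)(x):=\D_{t,h,m}^{0}(\phi)(x)+h\Big(\G_1^{m,u}\big(x,\D_{t,h,m}^{0}(\phi)(x),\D_{t,h,m}^{1}(\phi)(x)\big)+\D_{t,h,m,\Sigma^m(x,u),k}^{2}(\phi)(x)\Big), \]
so that $T_{t,h}(\phi)=\max_{m\in\M}\max_{u\in\U}A_{m,u}(\phi)$. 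Since a supremum of monotone (resp. $L$-almost monotone) operators is again monotone (resp. $L$-almost monotone with the same $L$), it suffices to prove both statements for each $A_{m,u}$, with constants uniform in $m$ and $u$.

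Next I would unfold $\D^{0},\D^{1},\D^{2}$ using~{\rm (\ref{defDcal}-\ref{defpol})} and~\eqref{defpoly2k}, collect all the terms under one conditional expectation, and use the structural hypothesis on $f^m-\underline{f}^{m}$ (together with invertibility of $\underline{\sigma}^{m}(x)$) to rewrite the first–order term as $g^m(x,u)\tp\Sigma^m(x,u)\tp\Delta W$. This yields
\[ A_{m,u}(\phi)(x)=\E\big[\phi(\hat{X}^m(t+h))\,\mathcal K_{m,u}(x,\Delta W,h)\mid\hat{X}^m(t)=x\big]+h\,\ell^m(x,u), \]
where the additive constant $h\,\ell^m(x,u)$ is irrelevant for monotonicity and, abbreviating $z_j:=[\Sigma^m(x,u)\tp\Delta W]_j$,
\[ \mathcal K_{m,u}(x,\Delta W,h)=(1-K)-h\,\delta^m(x,u)+\sum_{j=1}^{\ell}\Big([g^m(x,u)]_j\,z_j+c_k\,\|\Sigma^m_{.j}(x,u)\|_2^{-4k}\,h^{-(2k+1)}\,z_j^{4k+2}\Big), \]
with $K=\tr(\Sigma^m(x,u)\Sigma^m(x,u)\tp)/(4k+2)$ as in Theorem~\ref{theo2}. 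Because $A_{m,u}$ is affine in $\phi$ with kernel $\mathcal K_{m,u}$, it is monotone whenever $\mathcal K_{m,u}\ge0$ almost surely, and it is $L$-almost monotone with $L=\E[(\mathcal K_{m,u})^-]$; thus everything reduces to an almost-sure lower bound on $\mathcal K_{m,u}$.

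Here the hypothesis $\bar{a}<4k+2$ enters: it forces $K\le\bar{a}/(4k+2)<1$, so $1-K$ is a strictly positive constant. For each $j$ the map $z\mapsto[g^m(x,u)]_j\,z+c_k\|\Sigma^m_{.j}(x,u)\|_2^{-4k}h^{-(2k+1)}z^{4k+2}$ is, since $c_k>0$, a scalar polynomial of even degree $4k+2$ with positive leading coefficient, so an elementary one–variable minimisation bounds it below by $-C''\,h^{(2k+1)/(4k+1)}$, and the bounds on $g^m$ and on $\tr(\Sigma^m\Sigma^m\tp)$ (which controls each $\|\Sigma^m_{.j}\|_2$ and forces $\ell\le d$) make $C''$ uniform in $x,u,m$. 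Together with the upper bound $\bar\delta$ on $\delta^m$ this gives, almost surely,
\[ \mathcal K_{m,u}(x,\Delta W,h)\ \ge\ (1-K)-\bar\delta\,h-d\,C''\,h^{(2k+1)/(4k+1)}. \]
The right-hand side tends to $1-K>0$ as $h\downarrow0$, so there is $h_0>0$ with $\mathcal K_{m,u}\ge0$ for all $h\le h_0$, giving monotonicity. For $Ch$-almost monotonicity valid for arbitrary $h$, I would use $(2k+1)/(4k+1)\le1$: for $h\ge1$ the displayed bound is $\ge-Ch$; for $h\in[h_0,1]$ it is bounded below by a constant, hence by $-(\mathrm{const}/h_0)\,h$; and for $h\le h_0$ it is nonnegative. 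So $\mathcal K_{m,u}\ge-Ch$ a.s. for all $h>0$, whence $\E[(\mathcal K_{m,u})^-]\le Ch$, and taking the supremum over $m,u$ concludes.

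I expect the main obstacle to be precisely the comparison in the displayed inequality: the drift–correction term of $\mathcal K_{m,u}$ is linear, hence unbounded, in the Gaussian increment $\Delta W$, and it must be absorbed by the degree-$(4k+2)$ contribution coming from $\Po_{\Sigma^m(x,u),k}$. The structural hypothesis $f^m-\underline{f}^{m}=\underline{\sigma}^{m}\Sigma^m g^m$ is what makes this domination work term by term, the same matrix $\Sigma^m(x,u)$ governing both contributions; the hypothesis $\bar{a}<4k+2$ keeps the constant $1-K$ bounded away from $0$; and the remaining care is in keeping every constant ($C''$, $h_0$, $C$) uniform in $x,u,m$ and in $h$.
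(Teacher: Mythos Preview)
Your argument is correct and follows the paper's architecture exactly: write $T_{t,h}$ as a supremum of operators that are affine in $\phi$, express each as a conditional expectation against an explicit kernel, and reduce both conclusions to an almost-sure lower bound on that kernel of the form $1-\bar a/(4k+2)-O(h)-O(h^{(2k+1)/(4k+1)})$. The only technical difference is in how the linear drift contribution is dominated by the degree-$(4k+2)$ term: the paper bounds $\|\Sigma^{mT}w\|_2$ via two successive Young inequalities with two auxiliary parameters $\epsilon,\eta$ and then chooses $\eta=\epsilon^2$ so that the coefficient in front of $\Po_{\Sigma,k}$ becomes exactly $1$, whereas you minimise the scalar polynomial $az+bz^{4k+2}$ directly for each coordinate $j$; your route is slightly more elementary and produces the same exponent $(2k+1)/(4k+1)$.

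One small slip worth flagging: the bound on $\tr(\Sigma^m\Sigma^{mT})$ does not by itself ``force $\ell\le d$''. In the paper's setting $\ell\le d$ holds because of the \emph{construction} of $\Sigma^m$ (square root or rank-revealing Cholesky of a $d\times d$ matrix), and that is enough for your $\sum_{j=1}^\ell(-C''h^{(2k+1)/(4k+1)})$ estimate. Alternatively, the coordinate sum $\sum_j |[g^m]_j|^{(4k+2)/(4k+1)}\|\Sigma^m_{.j}\|_2^{4k/(4k+1)}$ is bounded, independently of $\ell$, by a H\"older/Young step in terms of $\|g^m\|_2^2$ and $\tr(\Sigma^m\Sigma^{mT})$, so the conclusion is unaffected either way.
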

\begin{proof}
Let $\phi,\psi:\R^d\to\R$ be bounded and let $h>0$.
We can write
$T_{t,h}$ as the supremum over $m\in \M$ and $u\in \U$ of the operators
\begin{align*} T_{t,h}^{m,u}(\phi)(x)= \; &  \D_{t,h,m}^{0}(\phi)(x)\\
&+ h \big(
\G_1^{m,u}(x, \D_{t,h,m}^{0}(\phi)(x),\D_{t,h,m}^{1}(\phi)(x))
+  \D_{t,h,m,\Sigma^m(x,u),k}^2(\phi)(x)\big)\\
=\; & \E\left[\phi(\hat{X}^m(t+h)) \Po^{h,m,u,x}(h^{-1/2}(W_{t+h}-W_t))
 \mid \hat{X}^m(t)=x\right]\\
&+h \ell^m(x,u)
\enspace ,
\end{align*}
where 
\begin{align*} \Po^{h,m,u,x}(w)=&1+h (f^m(x,u)-\underline{f}^{m}(x)) \cdot 
((\underline{\sigma}(x)\tp)^{-1} h^{-1/2} w)\\
&-h \delta^m(x,u) 
+\Po_{\Sigma^m(x,u),k}(w)
\enspace.
\end{align*}
If $L\geq 0$ is such that $-L$ is a lower bound of $\Po^{h,m,u,x}(w)$ for
all $m,u,x,w$, we obtain that the operators $T_{t,h}^{m,u}$ 
satisfy~\eqref{amonotone}, and taking the supremum, we get that
$T_{t,h}$ also satisfies~\eqref{amonotone} on the set
$\F$ of bounded continuous functions $\R^d\to\R$.
Let $C$ be an upper bound of all the $\delta^m$
and $\|g^m\|_2$ with $m\in \M$ 
(which is a finite set).
We get that 
\begin{align*} \Po^{h,m,u,x}(w)\geq &1-h^{1/2}C 
\|\Sigma^m(x,u)\tp w\|_2-h C
+\Po_{\Sigma^m(x,u),k}(w)
\enspace.
\end{align*}
For any matrix $\Sigma\in\R^{d\times \ell}$, %
$w\in\R^d$, and $\epsilon,\eta>0$, we have
\begin{align*}
 \|\Sigma\tp w\|_2 &\leq \frac{\epsilon}{2} \|\Sigma\tp w\|_2^2
+\frac{1}{2\epsilon}\\
&=   \frac{\epsilon}{2} \left(\sum_{j=1}^{\ell}\left(\frac{[\Sigma\tp w]_{j}}{\|\Sigma_{.j}\|_2}\right)^2 \|\Sigma_{.j}\|_2^{2 }\right)
+\frac{1}{2\epsilon}\\
&\leq   \frac{\epsilon\eta^{2k}}{4k+2} \left(\sum_{j=1}^{\ell}\left(\frac{[\Sigma\tp w]_{j}}{\|\Sigma_{.j}\|_2}\right)^{4k+2} \|\Sigma_{.j}\|_2^{2 }\right)
+\frac{2k\epsilon}{(4k+2)\eta}\left(\sum_{j=1}^{\ell} \|\Sigma_{.j}\|_2^{2 }\right)
+\frac{1}{2\epsilon}\\
&=   \frac{\epsilon\eta^{2k}}{4k+2} c_k^{-1}
\left( \Po_{\Sigma,k}(w)+\frac{\tr(\Sigma\Sigma\tp)}{4k+2}\right)
+\frac{2k\epsilon}{(4k+2)\eta}
\tr(\Sigma\Sigma\tp)
+\frac{1}{2\epsilon}\enspace ,
\end{align*}
with $c_k>0$ as in~\eqref{defpoly2k}.
Taking $\eta=\epsilon^2$ such that 
$h^{1/2}C\frac{\epsilon^{4k+1}}{4k+2} c_k^{-1}=1$
and using that $\tr(\Sigma^m(x,u)\Sigma^m(x,u)\tp)\leq \bar{a}$,
 we obtain 
\begin{align*} \Po^{h,m,u,x}(w)\geq &1-hC -
\frac{\bar{a}}{4k+2}-\frac{h^{1/2}C}{\epsilon} (\frac{2k}{4k+2}\bar{a}+\frac{1}{2}))\enspace .\end{align*}
Since $\frac{h^{1/2}C}{\epsilon} $ is a multiple of $h^{(2k+1)/(4k+1)}$, 
there exists a constant $C_k$ depending on $k$, such that
\begin{align*} \Po^{h,m,u,x}(w)\geq &L_{k,h}:=1-hC -
\frac{\bar{a}}{4k+2}-C_k h^{(2k+1)/(4k+1)}\enspace, \end{align*}
for all $w\in\R^d$.
Let us choose $k$ such that $\frac{\bar{a}}{4k+2}<1$.
We get that the lower bound $L_{k,h}$ of $\Po^{h,m,u,x}$ is nonnegative 
for $h\leq h_0$ for some $h_0>0$, which implies from the above remark that
$T_{t,h}$ satisfies~\eqref{amonotone} with $L=0$.
Then, for $h\geq h_0$, $C_k h^{(2k+1)/(4k+1)}/h\leq C'$ for some
constant $C'>0$, which implies that
$L_{k,h}\geq -h(C+C')$ for all $h>0$.
This shows that $T_{t,h}$ satisfies~\eqref{amonotone} with $L=(C+C')h$.
\end{proof}
Note that the boundedness of $g^m$ holds if $f^m-\underline{f}^m$
is bounded and $\sigma^m(\sigma^m)\tp -a$ is uniformly
lower bounded by a positive matrix.
Also, the continuity of the maps to which $T_{t,h}$ is applied is
not necessary, Borel measurability is clearly sufficient.
The Lebesgue measurability is also sufficient since $h>0$
and $a^m(x)$ is positive definite, so that if $N$ is negligible,
then $X^m(t+h)\not\in N$ a.e.
In the latter case the inequalities and suprema in~\eqref{amonotone}
are for the a.s. partial order.

\begin{remark}\label{remark-k1}
As explained in Section~\ref{remark-k}, the
critical constraint that $\tr(a(x)^{-1} \partial_{\Gamma} {\G})\leq 1$
is necessary even in dimension $1$, and comes from the weak
weights of large values of the increments of the Brownian motion
in the expression of the derivatives as conditional expectations
in~\eqref{defDcal2}.
Let us see what happens when increasing $k$ 
by considering the simple example of  Section~\ref{remark-k} in dimension $1$.
So consider the same linear Hamiltonian and same operator $\Li$.
Then, the operator of Corollary~\eqref{cor-const} satisfies in any dimension:
\begin{align}
T_{t,h}(\phi)(x)&=  \E\left(\phi(x+\sqrt{h} N)
(1+ \Po_{\Sigma,k}(N)) \right)\enspace .
\end{align}
with  $\Po_{\Sigma,k}$ as in~\eqref{defpoly2k}, $\Sigma$ such that
$A-I= \Sigma\Sigma\tp$ and $N$ a $d$-dimensional normal random variable.
If $d=1$ and $\ell=1$, we can rewrite  $\Po_{\Sigma,k}$ as:
\[ \Po_{\Sigma,k}(w)= \frac{\Sigma^{2}}{4k +2}
\left(\frac{w^{4k+2}}{\E\left[N^{4k+2}\right]} -1\right)\enspace . \]
If we replace $N$ in the two above expressions (for consistency) 
by the random variable taking the values $\pm \nu$ with probability
 $1/(2\nu^2)$ and the value $0$ with probability $1-1/\nu^2$, where $\nu>1$,
we obtain the same expression as in~\eqref{disc-dim1} but with
$b=1+\frac{1}{4k+2} (A_{11}-1)(\nu^{2}-1)$.
As in Section~\ref{remark-k}, \eqref{disc-dim1}
is equivalent to an  explicit finite difference
discretization of~\eqref{HJB}
with a space step $\Delta x=\sqrt{h}\nu$, which
is consistent with the Hamilton-Jacobi-Bellman
equation~\eqref{HJB} if and only if $b=A_{11}$ and so if
and only if $\nu=\sqrt{4k+3}$.
The condition in Theorem~\ref{theo-monotone}
 is  equivalent here to $A_{11}<4k+3$,
which is equivalent to the strict CFL condition $A_{11}h< (\Delta x)^2$.
The difference with the scheme of~\cite{touzi2011} is that
we can increase $\nu$, thus the ratio between  $\Delta x$ and $\sqrt{h}$,
by increasing $k$.
\end{remark}

In the sequel, we shall also need the following property which is
standard.
We shall say that an operator $T$ between any sets $\F$ and $\F'$
of partially ordered sets of
real valued functions, which are stable by the addition of a constant function
(identified to a real number),
is \NEW{additively $\alpha$-subhomogeneous} if 
\begin{equation}\label{def-hom}
\lambda\in\R,\lambda \geq 0,\; \phi\in \F\; 
 \implies T(\phi+\lambda)\leq T(\phi)+\alpha \lambda 
\enspace .\end{equation}

\begin{lemma}\label{lem-hom}
Let $T_{t,h}$ be as in Corollary~\ref{cor-const}.
Assume that $\delta^m$ is lower bounded in $x$ and $u$.
Then,  $T_{t,h}$ is additively $\alpha_h$-subhomogeneous
over the set of bounded continuous functions $\R^d\to\R$,
for some constant $\alpha_h=1+Ch$ with $C\geq 0$.
\end{lemma}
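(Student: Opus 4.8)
The plan is to mimic the structure of the proof of Theorem~\ref{theo-monotone}, working with the affine pieces $T_{t,h}^{m,u}$ and their probabilistic representation. Recall from that proof that for bounded $\phi$,
\[
T_{t,h}^{m,u}(\phi)(x)=\E\left[\phi(\hat X^m(t+h))\,\Po^{h,m,u,x}(h^{-1/2}(W_{t+h}-W_t))\mid \hat X^m(t)=x\right]+h\,\ell^m(x,u)\enspace,
\]
and that, by~\eqref{theo2-1} applied to each $\Sigma^m(x,u)$ together with the fact that the normal increments have zero odd moments, the polynomial $\Po^{h,m,u,x}$ has conditional expectation equal to $1+h\,(f^m(x,u)-\underline f^m(x))\cdot\E[(\underline\sigma(x)\tp)^{-1}h^{-1/2}(W_{t+h}-W_t)] - h\,\delta^m(x,u) = 1 - h\,\delta^m(x,u)$ (the drift term vanishes in expectation). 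This is the one computation I would isolate first.

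First I would add a constant: for $\lambda\geq 0$ and $\phi$ bounded continuous, $T_{t,h}^{m,u}(\phi+\lambda)(x)=T_{t,h}^{m,u}(\phi)(x)+\lambda\,\E[\Po^{h,m,u,x}(h^{-1/2}(W_{t+h}-W_t))\mid\hat X^m(t)=x]$, since $\phi\mapsto T_{t,h}^{m,u}(\phi)(x)$ is affine in $\phi$ with the additive term $h\ell^m(x,u)$ independent of $\phi$ and the multiplicative kernel $\Po^{h,m,u,x}$ independent of $\phi$. By the computation above, that conditional expectation equals $1-h\,\delta^m(x,u)$. Using the lower bound $\delta^m(x,u)\geq -C$ for some $C\geq 0$ (valid uniformly in $x,u$ and over the finite set $\M$), we get $1-h\,\delta^m(x,u)\leq 1+Ch$, and since $\lambda\geq 0$,
\[
T_{t,h}^{m,u}(\phi+\lambda)(x)\leq T_{t,h}^{m,u}(\phi)(x)+(1+Ch)\lambda\enspace.
\]
Then I would take the supremum over $m\in\M$ and $u\in\U$ on both sides; since $T_{t,h}=\sup_{m,u}T_{t,h}^{m,u}$ and the right-hand bound has the form (something $\leq T_{t,h}(\phi)(x)$) plus a constant not depending on $m,u$, the inequality $T_{t,h}(\phi+\lambda)(x)\leq T_{t,h}(\phi)(x)+(1+Ch)\lambda$ follows, which is~\eqref{def-hom} with $\alpha_h=1+Ch$.

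The only mild subtlety — the step I'd be most careful about — is justifying that $\E[\Po^{h,m,u,x}(h^{-1/2}(W_{t+h}-W_t))\mid\hat X^m(t)=x]=1-h\delta^m(x,u)$: this uses~\eqref{theo2-1} (so the $\Po_{\Sigma^m(x,u),k}$ part contributes $0$), uses that $\E[W_{t+h}-W_t\mid\hat X^m(t)=x]=0$ (independence of increments, killing the drift-correction term which is linear in $w$), and uses that the constant terms $1$ and $-h\delta^m(x,u)$ pass through the expectation unchanged. All of these are routine, and no new assumption beyond the lower-boundedness of $\delta^m$ is needed — in particular no constraint on $k$ or $h$, so the statement holds for every $h>0$. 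I would close the proof by noting that boundedness and continuity (or merely measurability, as in the remark after Theorem~\ref{theo-monotone}) of $\phi$ are only used to make all the conditional expectations well defined.
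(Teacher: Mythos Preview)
Your proof is correct and arrives at exactly the constant the paper intends: the paper's own proof is the single sentence ``Take for $C$ a nonnegative upper bound of $-\delta^m$,'' and your argument via the kernel representation $\Po^{h,m,u,x}$ from the proof of Theorem~\ref{theo-monotone} is one clean way to unpack that line. An equally direct route---which is presumably what the paper has in mind---is to observe separately that $\D_{t,h,m}^0(\phi+\lambda)=\D_{t,h,m}^0(\phi)+\lambda$, while $\D_{t,h,m}^1$ and $\D_{t,h,m,\Sigma,k}^2$ annihilate constants (by the vanishing mean of $\Po^1$ and by~\eqref{theo2-1}), so the only contribution beyond $\lambda$ comes from $-h\,\delta^m(x,u)\lambda$ inside $\G_1^{m,u}$; but this is the same computation you do, just organized differently.
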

\begin{proof}
Take for $C$ a nonnegative upper bound of $-\delta^m$.
\end{proof}
With the monotonicity, the $\alpha_h$-subhomogeneity implies the 
$\alpha_h$-Lipschitz continuity of the operator, which allows one to show 
easily the stability as follows.
\begin{corollary}\label{cor-stability}
Let the assumptions and conclusion of Theorem~\ref{theo-monotone} and
Lemma~\ref{lem-hom} hold
and assume also that $\psi$ and $\ell^m$ are bounded.
Let us consider the function $v^h$ defined on $\T_h\times \R^d$ 
by~\eqref{scheme} with $T_{t,h}$ as in Corollary~\ref{cor-const}
and $v^h(T,x)=\psi(x)$ for all $x\in \R^d$.
Then, $v^h$ is bounded, which means that the scheme~\eqref{scheme} is stable.
\end{corollary}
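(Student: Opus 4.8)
The plan is to derive from Theorem~\ref{theo-monotone} and Lemma~\ref{lem-hom} a sup-norm Lipschitz estimate for $T_{t,h}$ with Lipschitz constant $1+O(h)$, and then to propagate a uniform bound on $\|v^h(t,\cdot)\|_\infty$ backward along the recursion~\eqref{scheme}: the factor $1+O(h)$ produces only an $e^{O(T)}$ blow-up over the $T/h$ steps, which keeps the bound independent of $h$.

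First I would record the elementary fact that an operator on the set $\F$ of bounded functions $\R^d\to\R$ which is $Ch$-almost monotone in the sense of~\eqref{amonotone} and additively $\alpha_h$-subhomogeneous in the sense of~\eqref{def-hom} is Lipschitz for the sup-norm with constant $\alpha_h+Ch$: if $\phi,\psi\in\F$ and $c:=\sup|\psi-\phi|\geq 0$, then $\phi\leq\psi+c$, so almost-monotonicity followed by subhomogeneity gives $T_{t,h}(\phi)\leq T_{t,h}(\psi+c)+Ch\,c\leq T_{t,h}(\psi)+(\alpha_h+Ch)c$, and the reverse inequality by symmetry; with $\alpha_h=1+Ch$ from Lemma~\ref{lem-hom} this is a Lipschitz constant of the form $1+C'h$ valid for all $h>0$, while for $h\leq h_0$ genuine monotonicity gives $1+Ch$ directly. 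Next I would evaluate $T_{t,h}$ at the zero function: in the formula of Corollary~\ref{cor-const} each of $\D^0_{t,h,m}$, $\D^1_{t,h,m}$ and $\D^2_{t,h,m,\Sigma,k}$ annihilates $0$, while $\G^{m,u}_1(x,0,0)=\ell^m(x,u)$, so $T_{t,h}(0)(x)=h\max_{m\in\M}\sup_{u\in\U}\ell^m(x,u)$, whence $\|T_{t,h}(0)\|_\infty\leq hM$ with $M:=\sup_{m,u,x}|\ell^m(x,u)|<\infty$. Combining the two, for every $\phi\in\F$ I get the one-step estimate $\|T_{t,h}(\phi)\|_\infty\leq(1+C'h)\|\phi\|_\infty+hM$.

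It then remains to iterate. Writing $a_n:=\|v^h(T-nh,\cdot)\|_\infty$ for $0\leq n\leq T/h$, the terminal condition gives $a_0=\|\psi\|_\infty<\infty$ and~\eqref{scheme} together with the one-step estimate gives $a_{n+1}\leq(1+C'h)a_n+hM$. A one-line induction yields $a_n\leq(1+C'h)^n\|\psi\|_\infty+\tfrac{M}{C'}((1+C'h)^n-1)$ when $C'>0$, and $a_n\leq\|\psi\|_\infty+nhM\leq\|\psi\|_\infty+TM$ when $C'=0$. Since $n\leq T/h$ and $(1+C'h)^{T/h}\leq e^{C'T}$, I conclude that $a_n\leq e^{C'T}\|\psi\|_\infty+\tfrac{M}{C'}(e^{C'T}-1)$ for all admissible $n$ and all $h$, a bound depending only on $T$, $\|\psi\|_\infty$, $M$ and $C'$. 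Hence $\|v^h(t,\cdot)\|_\infty$ is bounded uniformly over $t\in\T_h\cup\{T\}$ and over $h$, which is the asserted stability of~\eqref{scheme}.

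I do not expect a genuine obstacle here. The only point needing a word is that $\F$ be stable under $T_{t,h}$, so that the backward recursion is well defined: boundedness is exactly the one-step estimate above, and continuity — which, as noted after Theorem~\ref{theo-monotone}, is not even required for what follows — is inherited from the continuity of the coefficients $\underline{\sigma}^m$, $\underline{f}^m$ and of the maps entering $\G^{m,u}_1$. The remaining care is purely bookkeeping: keeping the constant $C'$ produced by the almost-monotonicity and by Lemma~\ref{lem-hom} distinct from the other constants denoted $C$ in Theorem~\ref{theo-monotone}.
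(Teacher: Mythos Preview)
Your proposal is correct and follows essentially the same approach as the paper: both combine the $Ch$-almost monotonicity of Theorem~\ref{theo-monotone} with the $\alpha_h$-subhomogeneity of Lemma~\ref{lem-hom} and the bound $\|T_{t,h}(0)\|_\infty\leq Ch$ (obtained either directly from the formula of Corollary~\ref{cor-const} or via consistency) to produce a one-step recursion $K_t\leq(1+C'h)K_{t+h}+O(h)$, then iterate over $T/h$ steps. One tiny slip: in your Lipschitz argument, almost-monotonicity applied to the pair $(\phi,\psi+c)$ gives an extra term $Ch\sup((\psi+c)-\phi)\leq 2Ch\,c$ rather than $Ch\,c$, so the Lipschitz constant is $\alpha_h+2Ch$; this is still $1+O(h)$ and changes nothing in the rest of the argument.
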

\begin{proof}
The boundedness of $\ell^m$ implies that $\Ha(x,0,0,0)$ is bounded.
Applying Corollary~\ref{cor-const} to the zero function,
we deduce that the function
$|T_{t,h}(0)|$ is bounded  by the constant function $Ch$,
for some constant $C>0$.
From the conclusion of Theorem~\ref{theo-monotone}, 
one can choose $C>0$ such that \eqref{amonotone} holds with $L=Ch$ 
on the set $\F$ of bounded functions $\R^d\to\R$.
Let also $\alpha_h=1+Ch$ be as in Lemma~\ref{lem-hom}.
Applying \eqref{amonotone} and Lemma~\ref{lem-hom}
we obtain that if $K_{t+h}$ is a positive constant such
that  $|v^h(t+h,\cdot)|\leq  K_{t+h}$,  then
\begin{align*}
v^h(t,\cdot) & \leq Ch(2 K_{t+h})+ T_{t,h}(K_{t+h}) \\
&\leq 2 Ch K_{t+h}+T_{t,h}(0)+\alpha_h K_{t+h} \\
&\leq Ch +(1+3Ch) K_{t+h}\enspace .\end{align*}
By symmetry, we obtain that $K_t= Ch +(1+3Ch) K_{t+h}$ 
is an upper bound of $|v^h(t,\cdot)|$.
By induction, we get that $v^h$ is bounded by $(1+3Ch)^{T/h}/3$ which is bounded
independently of $h$. This implies the stability of the scheme.
\end{proof}

Applying the theorem
of Barles and Souganidis~\cite{barles90}, we obtain the convergence
of the scheme.

\begin{corollary}
Let the assumptions and notations of Corollary~\ref{cor-stability} hold.
Assume also that~\eqref{HJB} has a strong uniqueness property 
for viscosity solutions
and let $v$ be its unique viscosity solution.
Let us  extend $v^h$ on $[0,T]\times \R^d$ as a continuous 
and piecewise linear function with respect to $t$.
Then, when $h\to 0^+$, $v^h$ converges to $v$ locally 
uniformly in $t\in [0,T]$ and $x\in \R^d$.
\end{corollary}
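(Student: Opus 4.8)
\section*{Proof proposal for the final corollary}

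The plan is to invoke the convergence theorem of Barles and Souganidis~\cite{barles90}, whose four hypotheses have now all been secured: monotonicity (Theorem~\ref{theo-monotone}), stability (Corollary~\ref{cor-stability}), consistency (Corollary~\ref{cor-const}), and strong uniqueness for~\eqref{HJB} (assumed). Since we only let $h\to 0^+$, it suffices to work with $h\leq h_0$, where Theorem~\ref{theo-monotone} gives that $T_{t,h}$ is genuinely monotone on bounded continuous functions $\R^d\to\R$; the $Ch$-almost monotonicity needed for larger $h$ plays no role in the limit. Together with monotonicity, Lemma~\ref{lem-hom} (additive $\alpha_h$-subhomogeneity) yields that $T_{t,h}$ is $\alpha_h$-Lipschitz for the sup-norm, which makes the piecewise-linear-in-$t$ extension of $v^h$ behave well and is exactly what is used to put the scheme in Barles--Souganidis form.

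First I would recast~\eqref{scheme} in implicit form. Writing $S(h,t,x,r,\phi):=r-T_{t,h}(\phi)(x)$ for $t\in\T_h$ and extending $v^h$ continuously and piecewise linearly in $t$ as in the statement, the relations~\eqref{scheme} and $v^h(T,\cdot)=\psi$ become $S(h,t,x,v^h(t,x),v^h)=0$ on $\T_h\times\R^d$ together with $v^h(T,\cdot)=\psi$. Monotonicity of $S$ in its function argument is the monotonicity of $T_{t,h}$ for $h\leq h_0$. Consistency is the content of Corollary~\ref{cor-const}: for $\varphi\in\C^{4}_{\text{b}}$ one has, uniformly in $(t,x)$, $h^{-1}(T_{t,h}(\varphi(t+h,\cdot))(x)-\varphi(t,x))\to \partial_t\varphi+\Ha(x,\varphi,D\varphi,D^2\varphi)$; using stability to reduce a general smooth test function to a $\C^{4}_{\text{b}}$ one near the point under consideration, and using Lemma~\ref{lem-hom} together with monotonicity to absorb the additive perturbations, this upgrades to the standard formulation that for every smooth $\varphi$ and every $(t_0,x_0)$ the $\liminf$ and $\limsup$ of $h^{-1}S(h,t,x,\varphi(t,x)+\zeta,\varphi+\zeta)$ as $h\to 0^+$, $(t,x)\to(t_0,x_0)$, $\zeta\to 0$ both equal $-\partial_t\varphi(t_0,x_0)-\Ha(x_0,\varphi(t_0,x_0),D\varphi(t_0,x_0),D^2\varphi(t_0,x_0))$; at $t_0=T$ the terminal condition $v^h(T,\cdot)=\psi$ is consistent trivially. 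Stability is Corollary~\ref{cor-stability}: the $v^h$ are bounded uniformly in $h$.

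Next I would run the Barles--Souganidis argument itself. Set $\bar v(t,x)=\limsup_{h\to 0^+,(s,y)\to(t,x)}v^h(s,y)$ and $\underline v(t,x)=\liminf_{h\to 0^+,(s,y)\to(t,x)}v^h(s,y)$; these are finite by stability, with $\underline v\leq\bar v$. The usual test-function argument, using monotonicity and consistency, shows that $\bar v$ is a viscosity subsolution and $\underline v$ a viscosity supersolution of~\eqref{HJB} on $[0,T)\times\R^d$, and that the terminal relations $\bar v(T,\cdot)\leq\psi\leq\underline v(T,\cdot)$ hold. The strong uniqueness (comparison principle) assumed for~\eqref{HJB} then forces $\bar v\leq\underline v$ on $[0,T]\times\R^d$, hence $\bar v=\underline v=v$. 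Equality of the upper and lower relaxed limits with the continuous function $v$ is equivalent to the locally uniform convergence $v^h\to v$ in $(t,x)$, which is the claim.

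The main obstacle is not any single hard estimate, since every quantitative ingredient is already in place; it is the careful verification of the hypotheses of~\cite{barles90} in our probabilistic setting: checking that $T_{t,h}$ maps bounded continuous (or merely measurable, cf.\ the remark after Theorem~\ref{theo-monotone}) functions to functions for which the relaxed semilimits are meaningful, translating the ``$v\in\C^{4}_{\text{b}}$'' consistency of Corollary~\ref{cor-const} into the test-function formulation with additive perturbations, which is precisely where Lemma~\ref{lem-hom} and monotonicity for $h\leq h_0$ enter and where stability is used to control the non-local conditional-expectation term, and finally handling the terminal layer $t=T$ after the piecewise-linear extension so that $\psi$ is attained in the viscosity sense. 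Once these points are dispatched, the convergence follows directly from the Barles--Souganidis theorem.
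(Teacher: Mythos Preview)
Your proposal is correct and follows exactly the same approach as the paper: both invoke the Barles--Souganidis theorem, using Corollary~\ref{cor-const} for consistency, Theorem~\ref{theo-monotone} for monotonicity, and Corollary~\ref{cor-stability} for stability, together with the assumed strong uniqueness. In fact the paper gives no further details beyond citing~\cite{barles90}, so your write-up is already more explicit than the original.
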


Similar results can be proved, under different assumptions on the
Hamiltonian, for value functions that have a given growth such as
a quadratic growth (functions that are bounded above and below by a multiple
of the quadratic function $\|x\|^2+1$). 
We shall not discuss this here  although this is the type of results that 
are needed for the Hamilton-Jacobi-Bellman equations
involved in the next section, see Theorem~\ref{th-dist}.
Indeed, there are few such results in the literature for unbounded
value functions which make more difficult to show all the 
steps of the convergence proof. For instance one need to extend 
the theorem of Barles and Souganidis~\cite{barles90} in the 
context of functions with a given growth.
Let us mention that in~\cite{assellaou},
Assellaou, Bokanowski and Zidani show convergence and estimation
results for semilagrangian schemes for quadratic growth value functions.
Unfortunately, nor the results nor the steps of the proof can
be used in our context due to the special assumptions made there.

\section{The probabilistic max-plus method}\label{sec-maxplus}

The algorithm of~\cite{fodjo1} was based on the scheme~\eqref{scheme},
with $T_{t,h}$ as in Corollary~\ref{cor-const}, and $k=0$.
Here, we shall construct it similarly but with $k$ as in
Theorem~\ref{theo-monotone}.
The originality of the algorithm of~\cite{fodjo1} is that
instead of applying a regression estimation to compute
$\D_{t,h}^i(v^h(t+h,\cdot))$ by projecting the functions inside the conditional
expectation into a (large) finite dimensional
linear space of functions, we approximate  $v^h$ by a max-plus 
linear combination of basic functions (namely quadratic forms) and use 
the following distributivity property which generalizes
Theorem 3.1 of 
McEneaney, Kaise and Han~\cite{mceneaney2011}.

In the sequel, we denote $\W=\R^d$ and $\D$ 
the set of measurable functions from $\W$ to $\R$ 
with at most some given growth or growth rate (for instance with at most
exponential growth rate), assuming that it contains the constant functions.

\begin{theorem}[\protect{\cite[Theorem 4]{fodjo1}}]\label{main-theo}
Let $G$ be a monotone additively $\alpha$-subhomogeneous operator
from $\D$ to $\R$, for some constant $\alpha>0$.
Let $(Z,\A)$ be a measurable space, and let $\W$ be endowed with its
Borel $\sigma$-algebra. 
Let $\phi:\W \times Z\to\R$ be a measurable map such that for all $z \in Z$, $\phi(\cdot,z)$ is %
continuous and belongs to $\D$. 
Let $v\in\D$ be such that
$v(W)=\sup_{z\in Z} \phi(W,z)$. Assume that $v$ is continuous and 
bounded. %
Then,
\[ G(v)= \sup_{\bar{z}\in\overline{Z}}
G(\bar{\phi}^{\bar{z}})\] 
where $\bar{\phi}^{\bar{z}} :  \W\to \R,\; W \mapsto \phi(W,\bar{z}(W))$,
and 
\begin{align*}
\overline{Z}=&\{\bar{z}\, : \W \to Z,\; \text{measurable
and such that}\; \bar{\phi}^{\bar{z}} \in\D\}.
\end{align*}
\end{theorem}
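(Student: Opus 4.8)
\textbf{Proof proposal for Theorem~\ref{main-theo}.}
The plan is to prove the two inequalities $G(v)\le \sup_{\bar z}G(\bar\phi^{\bar z})$ and $G(v)\ge \sup_{\bar z}G(\bar\phi^{\bar z})$ separately. The second one is the easy direction: for every admissible $\bar z\in\overline Z$ one has $\bar\phi^{\bar z}(W)=\phi(W,\bar z(W))\le \sup_{z}\phi(W,z)=v(W)$ pointwise, so monotonicity of $G$ gives $G(\bar\phi^{\bar z})\le G(v)$, and taking the supremum over $\bar z$ yields $\sup_{\bar z\in\overline Z}G(\bar\phi^{\bar z})\le G(v)$. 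Here I only need that $\bar\phi^{\bar z}\in\D$ (which is built into the definition of $\overline Z$) so that $G(\bar\phi^{\bar z})$ makes sense.

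For the reverse inequality, the idea is to construct, for each $\eps>0$, a measurable selector $\bar z_\eps\in\overline Z$ such that $\bar\phi^{\bar z_\eps}(W)\ge v(W)-\eps$ for all $W\in\W$, and then exploit monotonicity together with additive $\alpha$-subhomogeneity: $G(v)\le G(\bar\phi^{\bar z_\eps}+\eps)\le G(\bar\phi^{\bar z_\eps})+\alpha\eps\le \sup_{\bar z\in\overline Z}G(\bar\phi^{\bar z})+\alpha\eps$, and then let $\eps\to0$. So the whole burden is the existence of such an $\eps$-optimal measurable selector taking values in $Z$. This is a measurable selection statement: the map $W\mapsto v(W)$ is continuous (hence Borel), and for each $W$ the set $\{z\in Z:\phi(W,z)> v(W)-\eps\}$ is nonempty by definition of the supremum. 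One applies a measurable selection theorem (of Kuratowski--Ryll-Nardzewski / von Neumann--Aumann type) to the multifunction $W\rightrightarrows\{z\in Z:\phi(W,z)> v(W)-\eps\}$, using that $\phi$ is jointly measurable on $\W\times Z$ and $(Z,\A)$ is a measurable space, to obtain a measurable $\bar z_\eps:\W\to Z$ with $\phi(W,\bar z_\eps(W))>v(W)-\eps$ everywhere. One must then check that $\bar\phi^{\bar z_\eps}$ lies in $\D$: since $v\in\D$ and $\D$ contains the constant $-\eps$ and is (by the standing assumption on $\D$ as functions of at most a given growth rate) closed under the relevant operations, the bounds $v-\eps\le \bar\phi^{\bar z_\eps}\le v$ force $\bar\phi^{\bar z_\eps}\in\D$; in the bounded case of the statement this is immediate. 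Hence $\bar z_\eps\in\overline Z$, closing the argument.

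The main obstacle is the measurable selection step, and more precisely making it rigorous under the stated hypotheses: $(Z,\A)$ is merely an abstract measurable space, not necessarily a Polish space, so one needs a version of the selection theorem valid in that generality (e.g. via the graph of the multifunction being a measurable subset of $\W\times Z$ together with a suitable completeness/Suslin hypothesis, or by reducing to the situation of~\cite{mceneaney2011} where $Z$ is second countable). In the intended applications $Z$ is a nice (e.g. Borel) subset of a Euclidean space, so the classical Kuratowski--Ryll-Nardzewski theorem applies directly; the general case may require the standing conventions on $(Z,\A)$ made elsewhere in~\cite{fodjo1}. Everything else — the pointwise inequalities, invoking monotonicity and $\alpha$-subhomogeneity, and the passage to the limit $\eps\to0$ — is routine. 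Since this is Theorem~4 of~\cite{fodjo1}, I would simply cite that reference for the full details and only sketch the two-inequality structure above.
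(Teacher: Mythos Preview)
The paper does not give its own proof of this theorem: it is simply quoted verbatim from \cite[Theorem~4]{fodjo1} and used as a black box. So there is nothing in the present paper to compare your argument against. Your two-inequality structure (monotonicity for $\sup_{\bar z}G(\bar\phi^{\bar z})\le G(v)$; an $\eps$-optimal measurable selector combined with monotonicity and additive $\alpha$-subhomogeneity for the reverse) is the natural and standard route, and you have correctly isolated the only nontrivial step, namely the measurable selection, together with the caveat that the stated hypotheses on $(Z,\A)$ may be too weak for a general selection theorem and that one is implicitly relying on the standing conventions of \cite{fodjo1} (or on $Z$ being a nice Borel subset of Euclidean space, as in all the applications). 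Your final remark --- that one should simply cite \cite{fodjo1} for the details --- is exactly what the paper does.
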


To explain the algorithm, assume
that the final reward $\psi$ of the control problem
can be written as the supremum of a finite number 
of quadratic forms.
Denote $\Q_d=\Sy_d\times \R^d\times \R$ (recall that 
$\Sy_d$ is the set of symmetric $d\times d$
matrices) and let 
\begin{equation}\label{paramquad}
 q(x,z):= \frac{1}{2} x\tp Q x + b\cdot x +c, \quad
\text{with}\;\; z=(Q,b,c)\in \Q_d\enspace, \end{equation}
be the quadratic form with parameter $z$ applied to the vector $x\in\R^d$.
Then for $g_T=q$, we have
\[ v^h(T,x)=\psi(x)=\sup_{z\in Z_T} g_T(x,z)\]
where $Z_T$ is a finite subset of $\Q_d$.

The application of the operator  $T_{t,h}$ of Corollary~\ref{cor-const}
 to a (continuous)
function $\phi : \R^d \to \R, x\mapsto \phi(x) $ can be written, for each 
$x\in \R^d$, as
\begin{subequations}\label{def-T-G}
\begin{align}
T_{t,h}(\phi)(x)&=\max_{m\in \M}G_{t,h,x}^m(\tilde{\phi}^{m}_{t,h,x})\enspace , 
\end{align}
where 
\begin{align}
&S_{t,h}^m: \R^d\times \W \to \R^d, \; (x,W) \mapsto
S_{t,h}^m(x,W)= x + \underline{f}^m(x)h+ \underline{\sigma}^m(x) W
\enspace ,\label{def-Sm}\\
&\tilde{\phi}^{m}_{t,h,x}  =\phi(S_{t,h}^m(x,\cdot))\in \D
\quad \text{if}\; \phi\in \D\enspace ,
\label{def-tilde}
\end{align}
and $G_{t,h,x}^m$ is the operator from $\D$ to $\R$ given by
\begin{eqnarray}
\lefteqn{G_{t,h,x}^m(\tilde{\phi})
= D_{t,h,m,x}^{0}(\tilde{\phi})}\nonumber\\
&&+ h  \max_{u\in \U}\big(
\G_1^{m,u}(x, D_{t,h,m,x}^{0}(\tilde{\phi}),D_{t,h,m,x}^{1}(\tilde{\phi}))
+D_{t,h,\Sigma^m(x,u),k}^2(\tilde{\phi})  \big)\enspace ,
\label{defG}\end{eqnarray}
\end{subequations}
with 
\begin{align*}
&D_{t,h,m,x}^0(\tilde{\phi})=
\E(\tilde{\phi}(W_{t+h}-W_t))\enspace ,\\
&D_{t,h,m,x}^1(\tilde{\phi})=
\E(\tilde{\phi}(W_{t+h}-W_t) 
(\underline{\sigma}^m(x)\tp)^{-1}h^{-1} (W_{t+h}-W_t))\enspace ,\\
& D_{t,h,\Sigma,k}^2(\tilde{\phi})(x):=
h^{-1}\E\left[\tilde{\phi}(W_{t+h}-W_t) \Po_{\Sigma,k}(h^{-1/2}(W_{t+h}-W_t))\right]
\enspace ,
\end{align*}
$\G_1^{m,u}$ and $\Sigma^m(x,u)$ as in Section~\ref{sec-monotone}, 
and $\Po_{\Sigma,k}$ as in~\eqref{defpoly2k}.
Indeed, the Euler discretization  $\hat{X}^m$ 
of the diffusion with generator $\Li^m$ satisfies
\begin{equation}\label{def-hatxm}
\hat{X}^m(t+h)= S_{t,h}^m(\hat{X}^m(t), W_{t+h}-W_t)\enspace .
\end{equation}

Using the same arguments as for Theorem~\ref{theo-monotone} and 
Lemma~\ref{lem-hom},  one can obtain the stronger property
that for $h\leq h_0$, all the operators $G_{t,h,x}^m$ belong to the
class of monotone additively $\alpha_h$-subhomogeneous operators
from $\D$ to $\R$.
This allows us to apply Theorem~\ref{main-theo} and
thus the following result. %
\begin{theorem}[\protect{\cite[Theorem 2]{fodjo1},
compare with~\cite[Theorem 5.1]{mceneaney2011}}]\label{th-dist}
Consider the control problem of Section~\ref{sec-int}.
Assume that $\U=\R^p$ and that for each $m\in\M$,
$\delta^m$ and $\sigma^m$ are constant, $\sigma^m$ is nonsingular, $f^m$ 
is affine with respect to $(x,u)$, 
$\ell^m$ is quadratic with respect to $(x,u)$ and
strictly concave with respect to $u$, and that $\psi$
is the supremum of a finite number of quadratic forms.
Consider the scheme~\eqref{scheme}, 
with $T_{t,h}$ and $G_{t,h,x}^m$  as in~\eqref{def-T-G},
$\underline{\sigma}^m$ constant and nonsingular,
$\Sigma^m$ constant and nonsingular and $\underline{f}^m$ affine.
Assume that the %
operators $G_{t,h,x}^m$ belong to the
class of monotone additively ${\alpha}_h$-subhomogeneous operators
from $\D$ to $\R$, for some constant ${\alpha}_h=1+{C}h$ with 
${C}\geq 0$.
Assume also that the value function $v^h$ of~\eqref{scheme}
belongs to $\D$ and is locally 
Lipschitz continuous with respect to $x$.
Then, for all $t\in\T_h$, there exists a set $Z_t$ 
and a  map $g_t:\R^d\times Z_t\to \R$ such that
for all $z\in Z_t$, $g_t(\cdot, z)$ is a quadratic form and
\begin{equation}\label{supquadt}
 v^h(t,x)=\sup_{z\in Z_t} g_t(x,z)\enspace .\end{equation}
Moreover, the sets $Z_t$ satisfy
$ Z_t= \M\times 
\{\bar{z}_{t+h}:\W\to Z_{t+h}\mid \text{Borel measurable}\}$. %
\end{theorem}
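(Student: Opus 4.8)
The plan is to prove Theorem~\ref{th-dist} by backward induction on $t\in\T_h$, starting from $t=T$. The base case is given: $v^h(T,\cdot)=\psi$ is by hypothesis the supremum of a finite number of quadratic forms, so we take $Z_T$ to be that finite index set (a subset of $\Q_d$) and $g_T(x,z)=q(x,z)$ as in~\eqref{paramquad}. For the inductive step, suppose $v^h(t+h,\cdot)=\sup_{z\in Z_{t+h}} g_{t+h}(\cdot,z)$ with each $g_{t+h}(\cdot,z)$ a quadratic form. We have $v^h(t,x)=T_{t,h}(v^h(t+h,\cdot))(x)=\max_{m\in\M}G_{t,h,x}^m(\widetilde{v^h(t+h,\cdot)}^{\,m}_{t,h,x})$ by~\eqref{def-T-G}. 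The key point is that $S_{t,h}^m(x,\cdot)$ is affine in $W$ (since $\underline{\sigma}^m$ is constant, $\underline{f}^m$ affine), so $\phi\mapsto\widetilde{\phi}^{\,m}_{t,h,x}$ sends a quadratic form in $x$-variables, precomposed, to a quadratic form in $W$; more to the point, writing $v^h(t+h,\cdot)$ as a sup over $z\in Z_{t+h}$ of quadratic forms, and over the Brownian path $W=W_{t+h}-W_t$, the composed function $\widetilde{v^h(t+h,\cdot)}^{\,m}_{t,h,x}(W)$ is a sup over $z\in Z_{t+h}$ of functions $(x,W)\mapsto g_{t+h}(S_{t,h}^m(x,W),z)$ that are quadratic in $(x,W)$ jointly.

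Next I would apply Theorem~\ref{main-theo} with $G=G_{t,h,x}^m$ (which by the hypothesis is monotone and additively $\alpha_h$-subhomogeneous), with measurable space $(Z,\A)=(Z_{t+h},\text{its }\sigma\text{-algebra})$, $\W=\R^d$ the Brownian increment space, and $\phi(W,z)=g_{t+h}(S_{t,h}^m(x,W),z)$ for fixed $x$. The continuity and growth (membership in $\D$) of $v^h(t+h,\cdot)$ and its composition are guaranteed by the local Lipschitz and $\D$-membership assumptions on $v^h$. Theorem~\ref{main-theo} then yields
\[ G_{t,h,x}^m(\widetilde{v^h(t+h,\cdot)}^{\,m}_{t,h,x})
=\sup_{\bar z:\W\to Z_{t+h}} G_{t,h,x}^m\bigl(W\mapsto g_{t+h}(S_{t,h}^m(x,W),\bar z(W))\bigr),\]
the supremum over Borel-measurable selections $\bar z$. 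Taking the max over $m\in\M$ as well, and setting $Z_t=\M\times\{\bar z_{t+h}:\W\to Z_{t+h}\mid\text{Borel measurable}\}$, we obtain $v^h(t,x)=\sup_{(m,\bar z)\in Z_t} g_t(x,(m,\bar z))$ with $g_t(x,(m,\bar z)):=G_{t,h,x}^m\bigl(W\mapsto g_{t+h}(S_{t,h}^m(x,W),\bar z(W))\bigr)$. This matches the claimed form of $Z_t$.

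It remains to verify that for each fixed $(m,\bar z)\in Z_t$, the function $x\mapsto g_t(x,(m,\bar z))$ is a quadratic form in $x$. This is where the structural hypotheses are essential: the inner function $W\mapsto g_{t+h}(S_{t,h}^m(x,W),\bar z(W))$ is, for fixed $\bar z$, a quadratic form in $(x,W)$ on the region where $\bar z$ is locally constant, and $G_{t,h,x}^m$ acts on it through $D^0,D^1,D^2$, which are expectations against $1$, a linear form in $W$, and the degree-$(4k+2)$ polynomial $\Po_{\Sigma,k}$ respectively, plus the outer term $h\max_{u\in\U}\G_1^{m,u}(x,\cdot,\cdot)$. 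The expectations produce affine-in-$x$ (from $D^0,D^1$, since $S_{t,h}^m$ is affine in $x$ and quadratic integrated against a polynomial stays polynomial, and actually quadratic) and the map $x\mapsto D^2$ also yields a quadratic-form contribution because $g_{t+h}\circ S_{t,h}^m$ has $x$-Hessian independent of $W$; then $\G_1^{m,u}$ being affine in $(p,r)$ and $x$ (from $f^m$ affine, $\delta^m$ constant, $\ell^m$ quadratic), together with the $\max_{u\in\R^p}$ of a quadratic strictly concave in $u$ giving a quadratic in the remaining variables via the explicit partial-maximization formula, keeps everything a quadratic form in $x$. \textbf{The main obstacle} is precisely this last verification: one must track carefully that applying $G_{t,h,x}^m$ — in particular the partial maximization over $u\in\R^p$ of the strictly concave quadratic $\G_1^{m,u}+D^2$, and the fact that $D^0,D^1,D^2$ of a jointly-quadratic function of $(x,W)$ are quadratic in $x$ — does not break the quadratic structure; this is a bookkeeping computation using the explicit Gaussian moments (as in the sketch of Theorem~\ref{theo2}) and the Schur-complement formula for maximizing a concave quadratic, essentially the same computation as in~\cite[Theorem 5.1]{mceneaney2011} adapted to the present operator with general $k$.
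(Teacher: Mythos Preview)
Your overall architecture---backward induction plus an appeal to Theorem~\ref{main-theo} at each step---is exactly the paper's. The genuine gap is in the ``bookkeeping computation'' you defer at the end, which is not bookkeeping at all.

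The problem is with $D^1_{t,h,m,x}(\tilde q_{t,h,x})$ when $\bar z$ depends on $W$. You assert that the $x$-Hessian of $g_{t+h}(S_{t,h}^m(x,W),\bar z(W))$ is independent of $W$; this is false. Since $S_{t,h}^m(x,W)=(I+hD\underline f^m)x+\cdots$, the Hessian in $x$ is $(I+hD\underline f^m)^{\mathsf T}Q(\bar z(W))(I+hD\underline f^m)$, which varies with $W$ through $\bar z$. Consequently $D^1(\tilde q_x)=\E[\tilde q_x(W)\,(\underline\sigma^m)^{-\mathsf T}h^{-1}W]$ picks up a genuinely quadratic-in-$x$ contribution (the correlation of $Q(\bar z(W))$ with $W$), not merely affine. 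Since $f^m(x,u)-\underline f^m(x)$ is affine in $(x,u)$, the term $(f^m-\underline f^m)\cdot D^1$ inside $\G_1^{m,u}$ is \emph{a priori} cubic in $(x,u)$, and your Schur-complement step for maximizing over $u$ does not even apply.

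The paper handles this via Lemma~\ref{lemma-quad}, whose hypothesis is precisely a uniform quadratic upper bound $q(\cdot,\tilde z(W))\le q(\cdot,\bar z)$. One first observes that $T^m_{t,h}$ sends a deterministic quadratic form to a quadratic form (here $D^1$ \emph{is} affine since $Q$ is constant); together with finiteness of $\M$ and of $Z_T$, this yields inductively that each $v^h(t+h,\cdot)$ is dominated by some single quadratic $Q(\cdot)=q(\cdot,\bar z)$. Monotonicity of $G^m_{t,h,x}$ then gives $G^{m,u}_{t,h,x}(\tilde q_x)\le G^m_{t,h,x}(\widetilde Q^m_{t,h,x})=:P(x)$ with $P$ quadratic. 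A polynomial of degree $\le 3$ in $(x,u)$ bounded above by a quadratic in $x$ for all $(x,u)$ must have vanishing cubic part; this is what forces $G^{m,u}_{t,h,x}(\tilde q_x)$ to be quadratic in $(x,u)$ and completes the argument. The paper explicitly remarks that counterexamples exist without the upper-bound hypothesis, so this step cannot be skipped.
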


Theorem~\ref{th-dist} uses the following property which was stated
in~\cite[Lemma~3]{fodjo1} without proof, 
and without the upper bound assumption.
Since counter examples exist when the upper bound assumption
is not satisfied, we are giving here the proof of the lemma.

\begin{lemma}[\protect{Compare with~\cite[Lemma~3]{fodjo1}}] \label{lemma-quad}
Let us consider the notations and assumptions of Theorem~\ref{th-dist}.
Let $\tilde{z}$  be a measurable function from $\W$ to $\Q_d$ 
and let $\tilde{q}_{x}$ denotes the measurable map
$\W\to\R,\; W \mapsto q(S_{t,h}^m(x,W), \tilde{z}(W) )$, 
with $q$ as in~\eqref{paramquad}.
Assume that there exists $\bar{z}\in\Q_d$ such that
$q(x, \tilde{z}(W))\leq q(x,\bar{z})$ for all $x\in \R^d$,
and almost all $W\in\W$,
and that $\tilde{q}_{x}$ belongs to $\D$, for all $x\in \R^d$.
Then, the function $x \mapsto G_{t,h,x}^m(\tilde{q}_{x})$
 is a quadratic function,
 that is it can be written as $q(x,Z)$ for some $Z\in \Q_d$.
\end{lemma}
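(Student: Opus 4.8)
The plan is to unwind all the definitions and verify that each of the three operations making up $G_{t,h,x}^m$ (namely $D^0_{t,h,m,x}$, $D^1_{t,h,m,x}$, and $D^2_{t,h,\Sigma^m(x,u),k}$, followed by the inner $\max_{u\in\U}$ and the outer addition) sends a quadratic-in-$x$ family to a quadratic-in-$x$ family, so that the composite does too. First I would fix $m$ and write $S_{t,h}^m(x,W)=x+\underline f^m(x)h+\underline\sigma^m(x)W$; since $\underline f^m$ is affine and $\underline\sigma^m$ is constant, $S_{t,h}^m(x,W)$ is affine in $(x,W)$. Hence $\tilde q_x(W)=q(S_{t,h}^m(x,W),\tilde z(W))=\tfrac12 S\tp Q(W) S+b(W)\cdot S+c(W)$ (with $z(W)=(Q(W),b(W),c(W))$) is, for each fixed $W$, a quadratic function of $x$ whose coefficients are measurable in $W$. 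Because $\Po_{\Sigma,k}$ and the linear weights $(\underline\sigma^m(x)\tp)^{-1}h^{-1}W$ are polynomials in $W$ (the former not depending on $x$, the latter on $x$ only through the constant matrix $\underline\sigma^m$), each conditional expectation $D^i_{\cdot}(\tilde q_x)$ integrates a polynomial-in-$W$ times a quadratic-in-$x$ with $W$-measurable coefficients. Provided the relevant integrals converge — this is where the upper-bound hypothesis $q(x,\tilde z(W))\le q(x,\bar z)$ enters, since it forces the second-order coefficient $Q(W)$ to be bounded above in the Loewner order, so the Gaussian moments against $\Po_{\Sigma,k}$ are finite and one may differentiate under the integral — the result is again a polynomial in $x$ of degree at most $2$, with coefficients obtained by taking (absolutely convergent) Gaussian moments. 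So each $D^i_{\cdot}(\tilde q_x)$ is a quadratic form in $x$, say with parameters depending affinely on the parameters $\big(\E[Q(W)\,p(W)],\E[b(W)\,p(W)],\E[c(W)\,p(W)]\big)$ for suitable polynomial weights $p$.

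Next I would feed these three quadratics into $\G_1^{m,u}$. By hypothesis $f^m$ is affine in $(x,u)$ and $\delta^m$ is constant, so $(f^m(x,u)-\underline f^m(x))\cdot D^1 - \delta^m D^0 + \ell^m(x,u)$ is a sum of: (i) an affine-in-$(x,u)$ function dotted with a quadratic-in-$x$ vector, which is degree $\le 3$ in $x$ in general — but here $f^m-\underline f^m$ must be arranged (as in the running assumptions, $f^m-\underline f^m=\underline\sigma\,\Sigma^m g^m$ with the data constant/affine) so that this term is in fact affine or quadratic in $x$ and quadratic in $u$; (ii) $-\delta^m$ times a quadratic in $x$; (iii) $\ell^m(x,u)$, quadratic in $(x,u)$ and strictly concave in $u$. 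Adding $D^2_{t,h,\Sigma^m(x,u),k}(\tilde q_x)$ — which is quadratic in $x$, and whose dependence on $u$ comes only through the constant matrix $\Sigma^m$ (constant by the theorem's hypotheses), hence is in fact $u$-independent — the bracketed expression inside $\max_{u\in\U}$ is jointly quadratic in $(x,u)$ and strictly concave in $u$ (the strict concavity is inherited from $\ell^m$, the other terms being at most affine in $u$ once the coefficients are constant). Therefore the partial maximization over $u\in\U=\R^p$ is attained at a unique $u^\star(x)$ which is affine in $x$ (solve the linear first-order condition), and substituting back yields a quadratic function of $x$ — this is the standard "partial minimization of a quadratic preserves quadraticity" fact, which I would invoke via a Schur-complement computation. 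Finally adding the leading $D^0_{t,h,m,x}(\tilde q_x)$, itself quadratic in $x$, gives that $x\mapsto G_{t,h,x}^m(\tilde q_x)$ is quadratic, i.e. equals $q(x,Z)$ for an explicit $Z\in\Q_d$.

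The main obstacle is the integrability/finiteness bookkeeping, and it is precisely the point the remark before the lemma flags: without the upper bound $q(x,\tilde z(W))\le q(x,\bar z)$ the coefficient matrix $Q(W)$ can grow so fast in $W$ that $\E[Q(W)\,\Po_{\Sigma,k}(W)]$ diverges (the Gaussian tail is beaten by a large positive eigenvalue of $Q(W)$, as in a $\exp(\theta w^2/2)$ with $\theta$ close to $1$), and then $G_{t,h,x}^m(\tilde q_x)$ need not even be finite, let alone quadratic — this is the source of the counterexamples mentioned. So the careful step is: from $q(\cdot,\tilde z(W))\le q(\cdot,\bar z)$ deduce $Q(W)\le \bar Q$ in the Loewner order for a.e.\ $W$ (compare second-order terms), conclude $\E[\|Q(W)\|\,|\Po_{\Sigma,k}(W)|\,e^{\langle W,\cdot\rangle}]<\infty$ uniformly on compacts in $x$, and hence justify that all the conditional expectations defining $D^0,D^1,D^2$ are finite, depend continuously on $x$, and may be computed term by term as finite linear combinations of Gaussian moments; the rest is routine linear algebra (affine substitution $S_{t,h}^m$, Gaussian moment identities, and the Schur-complement formula for the $u$-maximum).
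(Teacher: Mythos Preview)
There is a genuine gap in your argument at the crucial step (i). You correctly observe that $D^1_{t,h,m,x}(\tilde q_x)$ is, a priori, a \emph{quadratic} function of $x$ (since $\tilde q_x(W)$ is quadratic in $x$ with $W$-dependent coefficients, and $D^1$ simply integrates against a weight polynomial in $W$). Hence $(f^m(x,u)-\underline f^m(x))\cdot D^1_{t,h,m,x}(\tilde q_x)$, with $f^m-\underline f^m$ affine in $(x,u)$, is a priori of degree $3$ in $(x,u)$. Your claim that the running assumptions ``arrange'' this to be quadratic is not substantiated: the relation $f^m-\underline f^m=\underline\sigma\,\Sigma^m g^m$ (taken from Theorem~\ref{theo-monotone}) does not make $f^m-\underline f^m$ constant; under the hypotheses of Theorem~\ref{th-dist} it is genuinely affine in $(x,u)$. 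And $D^1$ does contribute a nontrivial quadratic in $x$ whenever $\E[Q(W)W_j]\neq 0$ for some $j$, which there is no reason to exclude for a general measurable $\tilde z$. If the expression inside $\max_u$ contains terms of the form $u_i\cdot(\text{quadratic in }x)$, the Schur-complement maximum over $u\in\R^p$ is of degree~$4$ in $x$, not $2$, and the conclusion fails.

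The paper's proof uses the upper-bound hypothesis in a much more substantive way than for integrability (integrability is in fact already assumed via $\tilde q_x\in\D$; note also that $Q(W)\le\bar Q$ does not bound $\|Q(W)\|$, so your integrability discussion is incomplete anyway). The paper first treats the \emph{deterministic} case $\tilde z\equiv\bar z$: there the identity~\eqref{defDcal} gives $D^1_{t,h,m,x}(\tilde Q^m_{t,h,x})=\E[DQ(S^m_{t,h}(x,W))]$, which is affine in $x$ because $DQ$ is affine; hence the deterministic $G^{m,u}$ is quadratic in $(x,u)$, strictly concave in $u$, and its $u$-maximum $P(x)$ is quadratic. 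Then the monotonicity of $G^m_{t,h,x}$ (assumed in Theorem~\ref{th-dist}) combined with $q(\cdot,\tilde z(W))\le q(\cdot,\bar z)$ yields $G^{m,u}_{t,h,x}(\tilde q_x)\le G^m_{t,h,x}(\tilde q_x)\le P(x)$ for all $(x,u)$. A polynomial of degree at most $3$ in $(x,u)\in\R^d\times\R^p$ that is bounded above by a polynomial of degree $2$ must have all its degree-$3$ coefficients equal to zero; this is the step that kills the cubic terms and forces $G^{m,u}_{t,h,x}(\tilde q_x)$ to be quadratic in $(x,u)$. Your argument omits this mechanism entirely, and without it the proof does not go through.
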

\begin{proof} %
Since $S_{t,h}^m$ is linear with respect to $x$,
$\tilde{q}_{x}(W)$ is a quadratic function of $x$
the coefficients of which depend on $W$.
Then, due to the assumptions that
$\underline{\sigma}^m$ and $\Sigma^m$ are constant and nonsingular,
we get that $D_{t,h,m,x}^i(\tilde{q}_{x})$ with $i=0,1$, and
$D_{t,h,\Sigma^m(x,u),k}^2(\tilde{q}^m_{x})$ are quadratic functions of
$x$. 
Let $G_{t,h,x}^{m,u}(\tilde{\phi})$ denotes the expression 
in~\eqref{defG} without the maximization in $u$.
We get that  $G_{t,h,x}^{m,u}(\tilde{q}_{x})$ is of the form 
$K(x,u)+(Ax+Bu)\cdot D_{t,h,m,x}^1(\tilde{q}_{x})$,
where $K$ is a quadratic function of $(x,u)$,
strictly concave with respect to $u$ and 
$A$ and $B$ are matrices.
This also holds if we replace $\tilde{z}(W)$ by $\bar{z}$,
that is if we replace $\tilde{q}_{x}$ by $\tilde{Q}^m_{t,h,x}$,
where $Q$ is  the quadratic function $Q(x)=q(x,\bar{z})$.
However in that case, since $Q$ is deterministic,
$D_{t,h,m,x}^1(\tilde{Q}^m_{t,h,x})= 
\D_{t,h,m}^1(Q)(x)= \E( D Q(S_{t,h}^m(x,W_{t+h}-W_t)))$ 
which is an affine function of $x$, since $D Q$ is affine.
Therefore $G_{t,h,x}^{m,u}(\tilde{Q}^m_{t,h,x})$ is
a quadratic function of $(x,u)$, strictly concave with respect
to $u$, so its maximum over $u\in\U$ is a quadratic function of
$x$, that we shall denote by $P(x)$.

Since $G_{t,h,x}^m$ is assumed to be  monotone from $\D$ to $\R$,
we get that 
$G_{t,h,x}^m(\tilde{q}_{x})\leq G_{t,h,x}^m(\tilde{Q}^m_{t,h,x})=P(x)$.
Therefore for all $x\in \R^d$ and $u\in \U=\R^p$,
we obtain that 
$K(x,u)+(Ax+Bu)\cdot D_{t,h,m,x}^1(\tilde{q}_{x})
= G_{t,h,x}^{m,u}(\tilde{q}_{x})\leq P(x)$.
So $(Ax+Bu)\cdot D_{t,h,m,x}^1(\tilde{q}_{x})$ is a polynomial of degree 
at most $3$ in the variables $x_1,\ldots, x_d,u_1,\ldots , u_p$
upper bounded by a polynomial of degree at most $2$. 
Taking the limit when the $x_i$ and $u_j$ go to $\pm \infty$,
we deduce that all the monomials of degree $3$ have zero coefficients,
so that $(Ax+Bu)\cdot D_{t,h,m,x}^1(\tilde{q}_{x})$ is a quadratic function
of $(x,u)$.
Hence, $G_{t,h,x}^{m,u}(\tilde{q}_{x})$ is 
a quadratic function of $(x,u)$, strictly concave with respect
to $u$, which implies that its maximum  over $u\in\U$,
$G_{t,h,x}^{m}(\tilde{q}_{x})$, is a quadratic function of
$x$.
\end{proof}
\begin{proof}[Sketch of proof of Theorem~\ref{th-dist}]
Lemma~\ref{lemma-quad} shows in particular (and indeed uses) the property that
each operator $T_{t,h}^m$ such that
$T_{t,h}^m(\phi)(x)= G_{t,h,x}^m(\tilde{\phi}^{m}_{t,h,x})$
with $G_{t,h,x}^m$ as in~\eqref{defG},
sends a deterministic quadratic form into a quadratic form.
Since for any finite number of quadratic forms,
there exists a quadratic form which dominates them,
the assumptions of Theorem~\ref{th-dist} imply that
$\psi$ and then all the functions $v^h(t,\cdot)$ are
upper bounded by a quadratic form (recall that $\M$ is a finite set).
Then, applying Theorem~\ref{main-theo} to the maps $v^h(t,\cdot)$
and using Lemma~\ref{lemma-quad}, we get the representation formula~\eqref{supquadt}.
\end{proof}

In Theorem~\ref{th-dist}, as in~\cite[Theorem 5.1]{mceneaney2011},
the sets $Z_t$ are infinite for $t<T$.
If the Brownian process is discretized in space,
the set $\W$ can be replaced by a finite subset, 
and the sets $Z_t$ become finite.
Nevertheless, their cardinality increases at each time step as
$\card{Z_t}=\card{\M} \times( \card{Z_{t+h}})^p$ where $p$ is the cardinality of
the discretization of $\W$. 
Then, if all the quadratic functions generated in this way were 
different, we would obtain that
$\card{Z_0}=\card{\M}^{-1/(p-1)} \times(\card{\M}^{1/(p-1)} \card{Z_{T}})^{p^{T/h}}$ is doubly exponential with respect to the number of time
discretization points and more than exponential with respect to $p$.
Since the Brownian process is $d$-dimensional, one may need to discretize
it with a number $p$ of values which is exponential in the dimension $d$.
Hence, the computational time of the resulting method would  be 
worst than the one of a usual grid discretization.
In~\cite{mceneaney2011}, McEneaney, Kaise and Han 
proposed to apply a pruning method at each time step $t\in\T_h$
to reduce the cardinality of $Z_t$.
For this, they assume already that the function  $v^h$ is represented
as the supremum of the quadratic functions parameterized by 
a finite set $Z_t$ of $\Q_d$.
They show that pruning (that is eliminating elements
of $Z_t$) is optimal if one looks for a subset of $\Q_d$
with given size representing $v^h$ as the supremum
of the corresponding quadratic functions with a minimal measure of the
error. There, the measure of the error is the maximum of the integral of the 
difference of functions with respect to any probabilistic measure on
$\R^d$. Then, restricting the set of probabilistic measures to the
set of normal distributions, they propose to use LMI techniques 
to find the elements of $Z_t$ that can be eliminated.
However, whatever the number $N$ of quadratic functions used
at the end to represent $v^h$ at each time step is,
the computational time of the pruning method is at least
in the order of the cardinal of the initial set $Z_t$.
Hence, if $Z_t$ is computed as above using a discretization of the
Brownian process and the representation of $v^h$ at time
$t+h$ already uses $N$ quadratic forms, then $\card{Z_t}=\card{\M} \times  N^p$,
so that it is exponential with respect to $p$ and can then
be doubly exponential with respect to the dimension $d$.

In~\cite{fodjo1}, we proposed to compute the expression of 
the maps $v^h(t,\cdot)$ as a maximum of quadratic forms
by using simulations of the processes $\hat{X}^{m}$. %
These simulations are not only used for regression estimations
of conditional expectations, which are computed there only in the
case of random quadratic forms, leading to quadratic forms,
but they are also used to
fix the ``discretization points'' $x$ at which the optimal
quadratic forms in the expression~\eqref{supquadt}
are computed. 
We present below a particular case of the algorithm (in~\cite{fodjo1},
different samplings were tested for the regression). However,
we add the possibility of having the same operator $\Li^m$ 
for different $m$, 
in which case we choose to simulate the process $\hat{X}^m$ only
one time for each possible $\Li^m$,
then the number of simulations and quadratic forms decreases.
To formalize this, we consider in the algorithm the projection map $\pi$
which sends an element $m$ of $\M$ to a particular element of its 
equivalence class for the equivalence relation ``$m\sim m'$ if $\Li^m=\Li^{m'}$''.

\noindent
\hrulefill \
\begin{algorithm}[Compare with \protect{\cite[Algorithm1]{fodjo1}}]\ \label{algo1}
\label{algo}
\noindent
\emph{Input:} A constant $\eps$ giving the precision,
a time step $h$ and a horizon time $T$ such that $T/h$ is an integer,
a $3$-uple $N=(\Nzero,\NX,\NW)$ of integers giving the numbers of samples,
such that $\NX\leq \Nzero$, a subset $\Msmall\subset\M$ and a projection
map $\pi: \M\to\Msmall$.
A finite subset $Z_T$ of $\Q_d$ such that
$|\psi(x)-\max_{z\in Z_T} q(x,z)|\leq \eps$, for all $x\in\R^d$,
and $\card{Z_T}\leq \cardMs\times  \Nzero$.
The operators $T_{t,h}$, $S_{t,h}^m$ and $G_{t,x,h}^m$  as in \eqref{def-T-G}
for $t\in\T_h$  and $m\in \M$,
with $\Li^m$ (and thus  $S_{t,h}^m$) depending only
on $\pi(m)$.

\noindent
\emph{Output:}
The subsets $Z_t$ of $\Q_d$, for $t\in\T_h\cup\{T\}$, 
and the approximate value function
$v^{h,N}:(\T_h\cup\{T\})\times \R^d\to \R$.

\noindent
$\bullet$ \emph{Initialization:}
Let  $\hat{X}^{m}(0)=\hat{X}(0)$, for all $m\in \Msmall$,
where $\hat{X}(0)$ is random and independent of the Brownian process.
Consider a sample of $(\hat{X}(0),(W_{t+h}-W_t)_{t\in \T_h})$
of size $\Nzero$ indexed by $\omega\in \Omega_{\Nzero}:=\{1,\ldots, {\Nzero}\}$,
and denote, for each $t\in\T_h\cup\{T\}$,
$\omega \in \Omega_{\Nzero}$, and $m\in \Msmall$,
$\hat{X}^{m}(t,\omega)$ the value
of  $\hat{X}^{m}(t)$ induced by this sample satisfying~\eqref{def-hatxm}.
Define $v^{h,N}(T,x)=\max_{z\in Z_T} q(x,z)$,
for $x\in \R^d$, with $q$ as in~\eqref{paramquad}.

\noindent
$\bullet$ For $t=T-h,T-2h,\ldots, 0$ apply the following 3 steps:

(1) Choose a random sampling $\omega_{i,1},\; i=1,\ldots, \NX$
among the elements of $\Omega_{\Nzero}$
and independently a random sampling  $\omega'_{1,j}\; j=1,\ldots, \NW$
among the elements of $\Omega_{\Nzero}$,
then take the product of samplings,
that is consider $\omega_{(i,j)}=\omega_{i,1}$ and $\omega'_{(i,j)}=\omega'_{1,j}$
for all $i$ and $j$,
leading to $(\omega_{\ell},\omega'_{\ell})$ 
for $\ell\in \Omega_{\NXW}:=\{1,\ldots,\NX\}\times \{1,\ldots, \NW\}$.

Induce the sample $\hat{X}^{m}(t,\omega_{\ell})$ (resp.\ $(W_{t+h}-W_t)(\omega'_{\ell})$)
for $\ell\in \Omega_{\NXW}$
of $\hat{X}^{m}(t)$ with $m\in\Msmall$ (resp.\ $W_{t+h}-W_t$).
Denote by $\W^N_t\subset \W$
the set of $(W_{t+h}-W_t)(\omega'_{\ell})$ for $\ell \in \Omega_{\NXW}$.

(2) For each $\omega\in\Omega_{\Nzero}$ and $m\in \Msmall$,  
denote $x_t=\hat{X}^{m}(t,\omega)$ and
construct $z_t\in \Q_d$ depending on  $\omega$ and  $m$ as follows: 

(a) Choose 
$\bar{z}_{t+h}:\W^N_t\to Z_{t+h}\subset \Q_d$ such that,
for all $\ell\in \Omega_{\NXW}$, we have 
\begin{align*}
& v^{h,N}(t+h,S^m_{t,h}(x_t,(W_{t+h} -W_t)(\omega'_\ell)))\\
&\; = q\big(S^m_{t,h}(x_t,(W_{t+h} -W_t)(\omega'_\ell))
,\bar{z}_{t+h}((W_{t+h} -W_t)(\omega'_\ell))\big)\enspace .
\end{align*}
Extend $\bar{z}_{t+h}$ as a measurable map from $\W$ to $\Q_d$.
Let $\tilde{q}_{t,h,x}$ be the element of $\D$ given by
$W \in \W  \mapsto q(S^m_{t,h}(x, W), \bar{z}_{t+h}(W) )$. 

(b) For each $\bar{m}\in\M$ such that $\pi(\bar{m} )=m$,
compute an approximation of  
$x \mapsto G_{t,h,x}^{\bar{m}} (\tilde{q}_{t,h,x})$ by a linear regression 
estimation on the set of quadratic forms using the sample
$(\hat{X}^{m}(t,\omega_\ell), (W_{t+h} -W_t)(\omega'_\ell))$, with $\ell\in\Omega_{\NXW}$, and denote by $z_t^{\bar{m}}\in \Q_d$ the parameter of the resulting quadratic form.

(c) Choose $z_t\in \Q_d$ optimal among the  $z_t^{\bar{m}}\in \Q_d$ at the point
$x_t$, that is such that
$q(x_t, z_t)= \max_{\pi(\bar{m})=m}q(x_t, z_t^{\bar{m}})$.

(3) Denote by $Z_t$ the set of all the $z_t\in \Q_d$ obtained in this way,
and define
\[ v^{h,N}(t,x)=\max_{z\in Z_t} q(x,z)\quad
\forall x\in \R^d \enspace .\]

\end{algorithm}
\hrulefill \

Note that no computation is done at Step (3), which gives only a
formula (or procedure)
to be able to compute the value function at each time step $t$ and
point $x\in\R^d$ as a function of the sets $Z_t$.
This is what is done for instance to obtain plots.
In particular, the algorithm only stores the elements of $Z_t$ 
which are elements of $\Q_d$. 
Since $Z_t$ satisfy
$\card{Z_t}\leq \cardMs \times \Nzero$ for all $t\in\T_h$,
and $\Q_d$ has dimension $(d+1)(d+2)/2$,
the memory space to store the value function at a time step is
in the order of $\cardMs \times \Nzero \times d^2$,
so the maximum space complexity of the algorithm is
$O(\cardMs \times \Nzero \times d^2\times T/h)$.
Before computing the value function,
one need to store the values of all the processes,
with a memory space in $O(\cardMs \times \Nzero \times d\times T/h)$.
Moreover, the total number of computations at each time step
is in the order of 
$(\cardMs \times \Nzero)^2\times \NW  \times d^2
+ \cardM \times \Nzero \times(\NX\times \NW\times d^2+\NX\times d^5+d^6)$,
where the first term corresponds to step (a) and the second one to 
step (b).
Note also that $\NX$  can be chosen to be in the order of a polynomial 
in $d$ since the regression is done on the set of quadratic forms,
so in general the second term is negligible, and it is also worth 
to take $\cardMs$ small.

As recalled above, the map 
$x \mapsto G_{t,h,x}^{\bar{m}} (\tilde{q}_{t,h,x})$ is a quadratic form,
hence there is no loss in choosing to do a regression estimation 
over the set of quadratic forms.
Hence, as stated in \cite[Proposition~5]{fodjo1},
under suitable assumptions, we have the convergence
$\lim_{\Nzero, \NXW\to\infty} v^{h,N}(t,x)= v^h(t,x)$.

\section{Numerical tests}\label{sec-illust}

To illustrate our algorithm, we consider the problem of
evaluating the superhedging price of an option under uncertain 
correlation model
with several underlying stocks (the number
of which determines the dimension of the problem),
and changing sign cross gamma.
The case with two underlying stocks 
was studied first as an example in Section 3.2 
of~\cite{kharroubi2014}, where the method proposed is based on a regression on 
a process involving not only the state but also the (discrete) control.
In~\cite{fodjo1}, we tested our  algorithm with $\Msmall=\M$
on the same $2$-dimensional example. Here we shall consider the same example
with $\Msmall$ reduced to one element and then consider a similar one
with 5 stocks (so in dimension $5$).
Illustrations are obtained from a \verb!C++! implementation
of Algorithm~\ref{algo1}, which can easily be adapted to any model.

With the notations of the introduction, the problem 
has no continuum control, so $u$ is omitted, and 
for all  $m \in \M$, $f^{m}=0$ and $\delta^m=0=\ell^m$.
So it reduces to maximize
\begin{align*}
J(t, x, \mu) :=& \E \left[ \psi(\xi_T)  \mid \xi_t = x \right] \enspace .
\end{align*}
The dynamics is given by 
$d \xi_{i,s} = \sigma_i \xi_{i,s} d B_{i,s}$ where the $B_{i}$ are Brownians
with uncertain correlations: $\<dB_{i,s},dB_{j,s}>=[\mu_s]_{ij} ds$ 
with $\mu_s\in \cor$,  a subset of the set of 
positive symmetric matrices with $1$ on the diagonal.
This is equivalent to the condition that
\[ [\sigma^{m}(x)\sigma^{m}(x)\tp]_{ij}= \sigma_i x_i \sigma_j x_j m_{ij},
\quad \text{for}\; m\in \cor \enspace .\]
Here we assume that $\cor$ is the convex hull of 
a finite set $\M$.
Since the Hamiltonian of the problem is linear with respect to $m$, the maximum
over $\cor$ is the same as the maximum over $\M$, so we can assume 
that the correlations satisfy  $\mu_s\in \M$.
We consider the following final payoff:

\begin{minipage}{.5\textwidth}
\begin{align*}
& \psi(x)=\psi_1(\max_{i\;\text{odd}} x_i-\min_{j\;\text{even}} x_j),\; x\in\R^d,\\
& \psi_1(x) = (x-K_1)^+-(x-K_2)^+,\; x\in \R,\\
&x^+ = \max(x,0),\\
&K_1 < K_2.
\end{align*}
\end{minipage}
\hfill
\begin{minipage}{.45\textwidth}
\ \\[1em]
\begin{picture}(0,0)%
\includegraphics{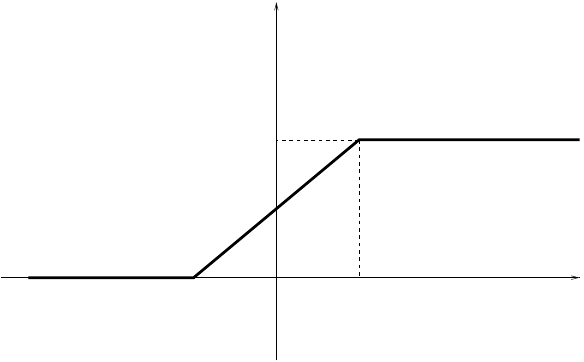}%
\end{picture}%
\setlength{\unitlength}{1160sp}%
\begingroup\makeatletter\ifx\SetFigFont\undefined%
\gdef\SetFigFont#1#2#3#4#5{%
  \reset@font\fontsize{#1}{#2pt}%
  \fontfamily{#3}\fontseries{#4}\fontshape{#5}%
  \selectfont}%
\fi\endgroup%
\begin{picture}(9506,5874)(1339,-7273)
\put(4201,-3661){\makebox(0,0)[lb]{\smash{{\SetFigFont{5}{6.0}{\rmdefault}{\mddefault}{\updefault}{\color[rgb]{0,0,0}$K_2-K_1$}%
}}}}
\put(4501,-6361){\makebox(0,0)[lb]{\smash{{\SetFigFont{5}{6.0}{\rmdefault}{\mddefault}{\updefault}{\color[rgb]{0,0,0}$K_1$}%
}}}}
\put(7201,-6361){\makebox(0,0)[lb]{\smash{{\SetFigFont{5}{6.0}{\rmdefault}{\mddefault}{\updefault}{\color[rgb]{0,0,0}$K_2$}%
}}}}
\end{picture}%
\end{minipage}

Since $\psi_1$ is nondecreasing, we have
$\psi(x)\geq \psi_1(x_i-x_j)$, for all $i$ odd and $j$  even.
Then, we can lower bound the value function in dimension $d$ by the
application of the value function of dimension $2$ and
volatilities  $(\sigma_i,\sigma_j)$ to $(x_i,x_j)$.

Note that all the coordinates of the controlled process stay in $\R_+$,
the set of positive real numbers.
To be in the conditions of Theorem~\ref{th-dist}, we approximate the function 
$\psi_1$  with a supremum of a finite number of quadratic forms on a large subset of $\R$, typically on $[-1000,1000]$,
so that $\psi$  is approximated with a supremum of a finite number of quadratic forms on the $x\in \R^d_+$ such that $x_i-x_j\in [-1000,1000]$.
Note that since the second derivative of $\psi_1$ is $-\infty$ in some points,
it is not  $c$-semiconvex for any $c>0$ and bounded domain, so
the approximation need to use some quadratic forms with a large negative 
curvature, and so we are not under the conditions of~\cite{mceneaney2011}. Moreover, since the state space is unbounded, one cannot 
approximate $\psi$ as a  supremum of a finite number of quadratic 
forms on all the state space as assumed in Algorithm~\ref{algo1}.
However, due to stability considerations,  the simulated
process stays with almost one probability in a ball around the
initial point, so that one may expect the value function 
to be well approximated 
in a bounded subset of $\R^d$.
The maps $\sigma^m$ for $m \in  \M$ are not constant but they are linear, 
and one can choose $\underline{\sigma}$ such that $\underline{\sigma}(x)^{-1}\sigma^m(x)$ is constant and $\underline{f}=0$, and get that
the result of Theorem~\ref{th-dist} still holds.

In the illustration below, we choose
$K_1=-5,\; K_2=5,\; T=0.25$, the time step $h=0.01$, 
the volatilities 
$\sigma_1=0.4,\; \sigma_2=0.3, \; \sigma_3=0.2, \sigma_4=0.3,\; \sigma_5=0.4$
and the following correlations sets:
\[ \M=\{m=\left[\begin{smallmatrix} 1 & m_{12}\\ m_{12} &1\end{smallmatrix}\right]\mid
m_{12}=\pm \rho\} 
\tag*{\text{for $2$ stocks,}} \]
and
\[ \M=\{m= \left[\begin{smallmatrix} 1 & m_{12}&0&0&0\\
 m_{12} &1&0&0&0\\ 0&0&1&0&0\\0&0&0&1&m_{45}\\
0&0&0&m_{45}&1 \end{smallmatrix}\right]\mid
m_{12}=\pm \rho,\; m_{45}=\pm\rho
\} \tag*{\text{for $5$ stocks.}} \]

In dimension $2$, we choose $\NX=10$, $\NW=1000$ and test several values 
of simulation size $\Nzero$, and compare our results with the true solutions 
that can be computed analytically when $\M$ is a singleton,
see Figures~\ref{fig1} and~\ref{fig2}.
For  $\rho=0$ or $\rho=0.4$, $k=0$ is sufficient in
Theorem~\ref{theo-monotone} (indeed $\G=0$ for $\rho=0$,
so there is no second derivative to discretize), whereas for  $\rho=0.8$,
one need to take $k=2$ to obtain the monotonicity of the scheme.
This may explain why a greater sampling size $\Nzero$ is needed
to obtain the convergence for $\rho=0.8$.

In dimension $5$, we choose $\NX=50$, $\NW=1000$ and $\Nzero=3000$,
and compare our results with a 
lower bound obtained from the results in dimension $2$, as
explained above, see Figure~\ref{fig3}.
Although, the lower bound appears to be above the value function 
computed from the Hamilton-Jacobi-Bellman equation in dimension 5,
the difference between the value function and the lower bound
is small and of the same amount as the difference observed in
Figure~\ref{fig2} between the value functions computed in dimension 2
with the simulation sizes $\Nzero=2000$  and $\Nzero=3000$.
This indicates that the size of the simulations  $\Nzero=3000$ is not enough
to attain the convergence of the approximation, although the results give
already the correct shape of the value function.
Such a result would be difficult to obtain with finite difference schemes,
and at least will take much more memory space.
For instance, the computing time  for one time step
of a finite difference scheme
on a regular grid over $[0,100]^5$ with $100$ steps by coordinate
is in $10^{10}$ and is thus
 comparable with the computing time of Algorithm~\ref{algo1},
$\Nzero^2\times \NW\times d^2$, with the above parameters,
whereas the memory space needed for the finite difference scheme
at each time step
is similar to the computing time and is thus  much larger than
the one needed in Algorithm~\ref{algo1} (in $\Nzero\times d^2=7.5\, 10^5$).

The computation of the value function in dimension $5$ took $\simeq 19$h
with the \verb!C++! program compiled with ``OpenMP''
on a $12$ core Intel(R) Xeon(R) CPU $E5-2667$ - $2.90$GHz
with $192$Go of RAM (each time iteration taking $\simeq 2500$s).
The main part of the computation time is taken by the
optimization part (a) of Algorithm~\ref{algo1}, with a
time in $O(\Nzero^2\times \NW\times d^2)$.
The bottleneck here is in the computation, for each given state $x$ at time 
$t+h$, of the quadratic form which is maximal in the
expression of $v^{h,N}(t+h,x)$.
Therefore, a better understanding of this maximization problem is necessary 
in order to decrease the total computing time.
 This would allow us to 
obtain better approximations in dimension 5 in particular,
and increase the dimension with a small cost.
Such an improvement is left for further work.

\figperso{\includegraphics[scale=0.30]{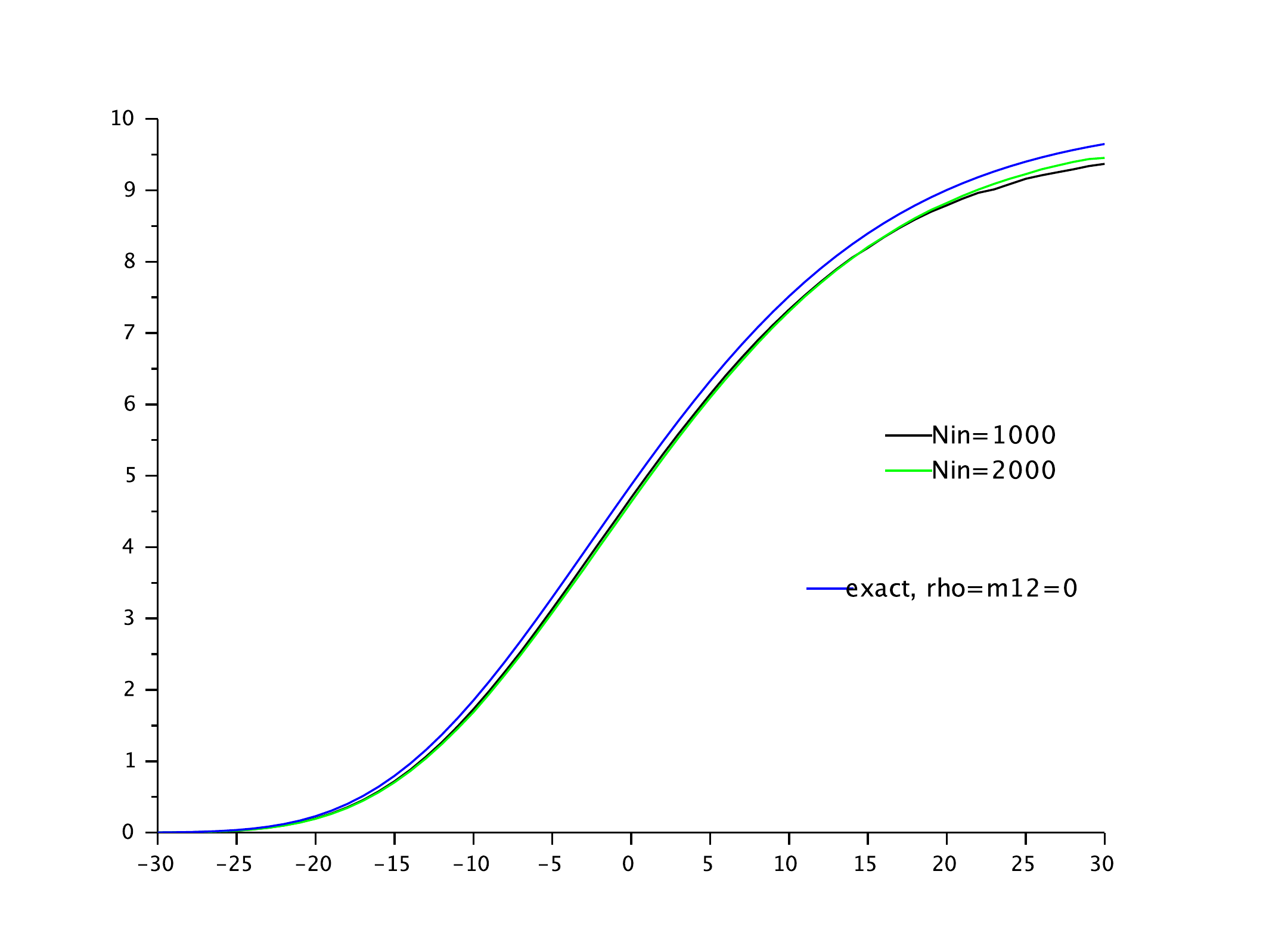}
\hfill \includegraphics[scale=0.30] {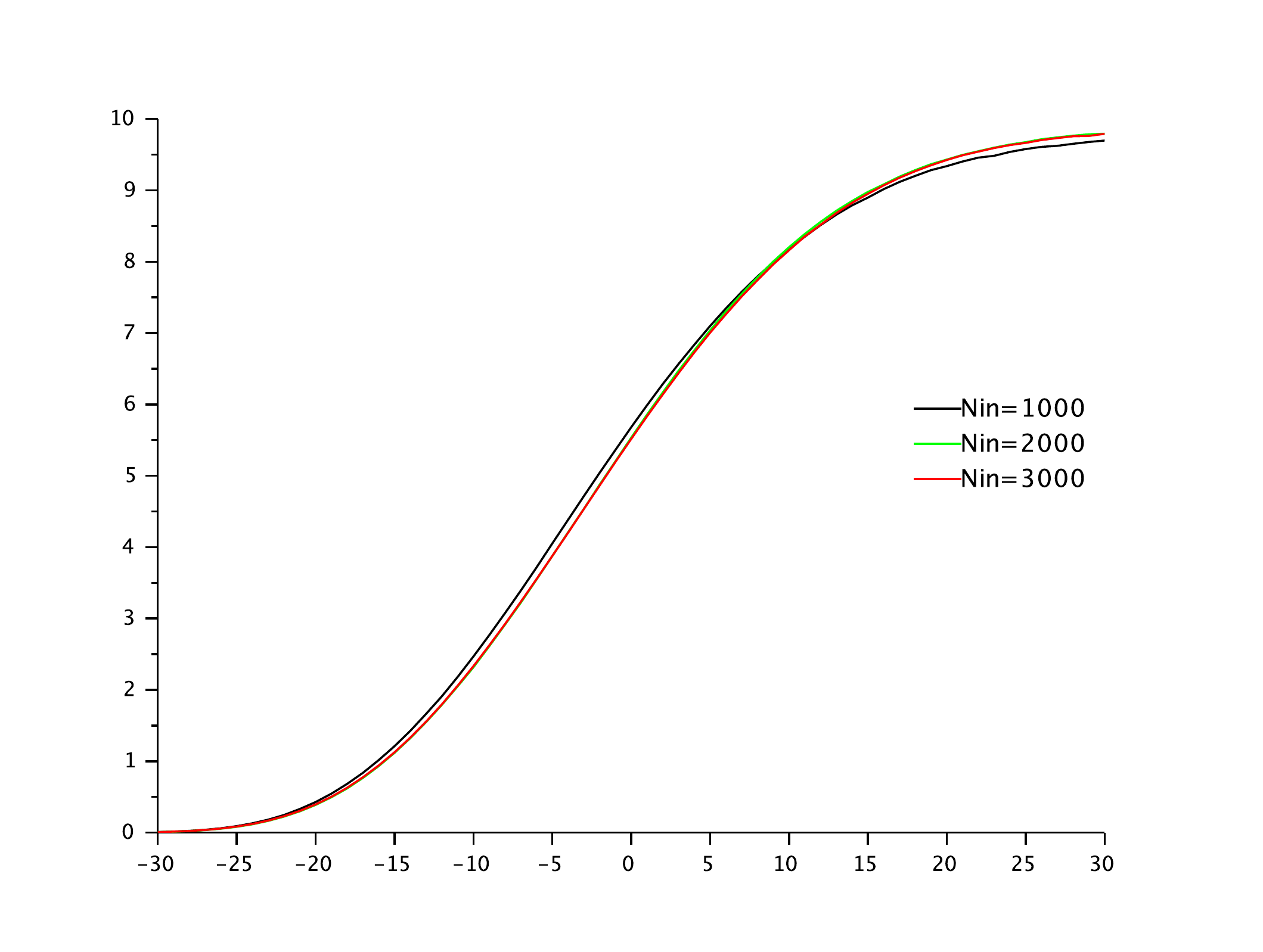} %
}{\caption{
Value function in dimension $2$, for $\rho=0$ on left,
and $\rho=0.4$ on right, at $t=0$, and $x_2=50$ as a function 
of $x_1-x_2$.
Here $\Nzero=1000,2000$, or $3000$, $\NX=10$, $\NW=1000$.
}\label{fig1}}

\figperso{\includegraphics[scale=0.30]{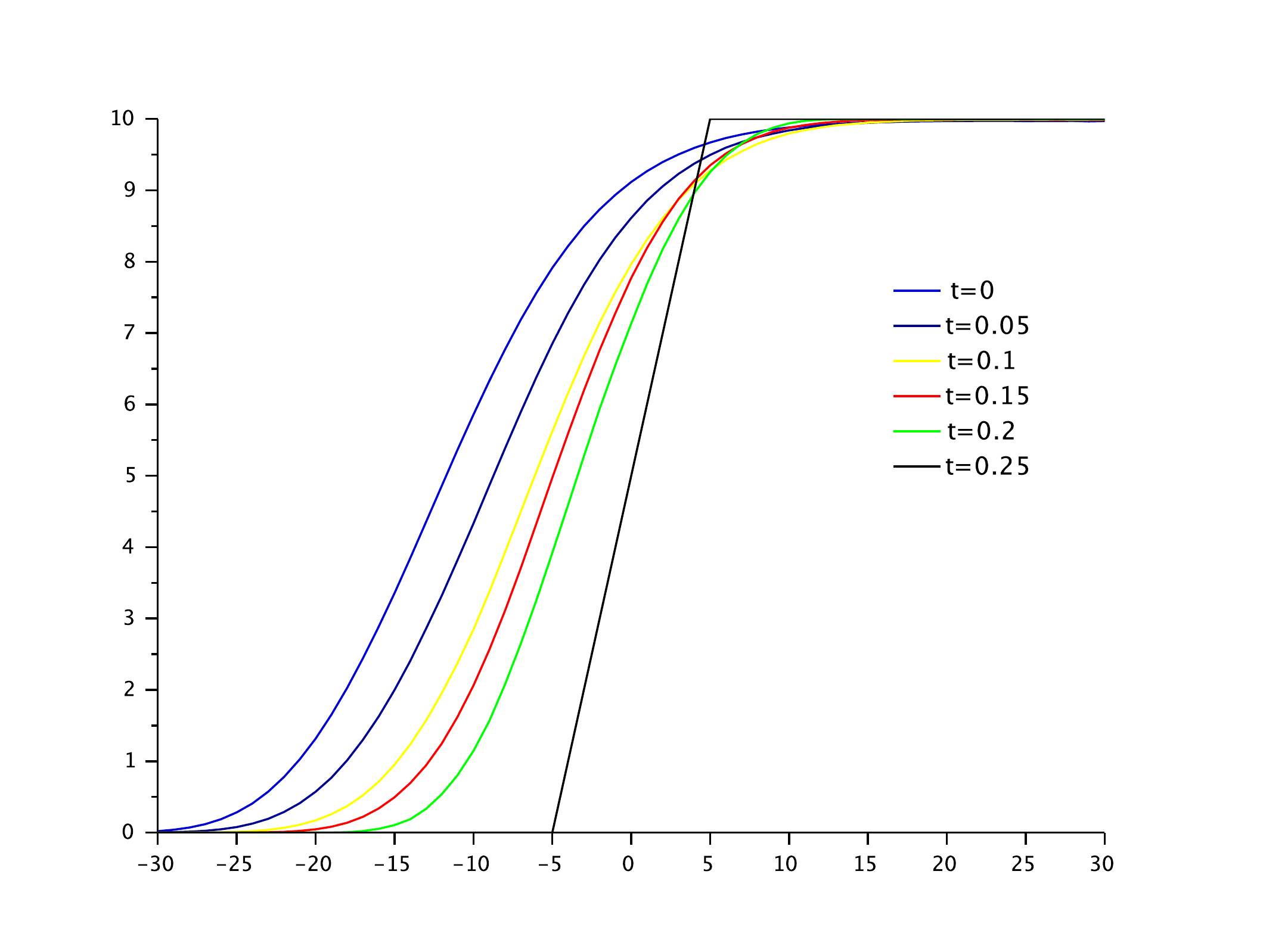} \hfill
\includegraphics[scale=0.30]{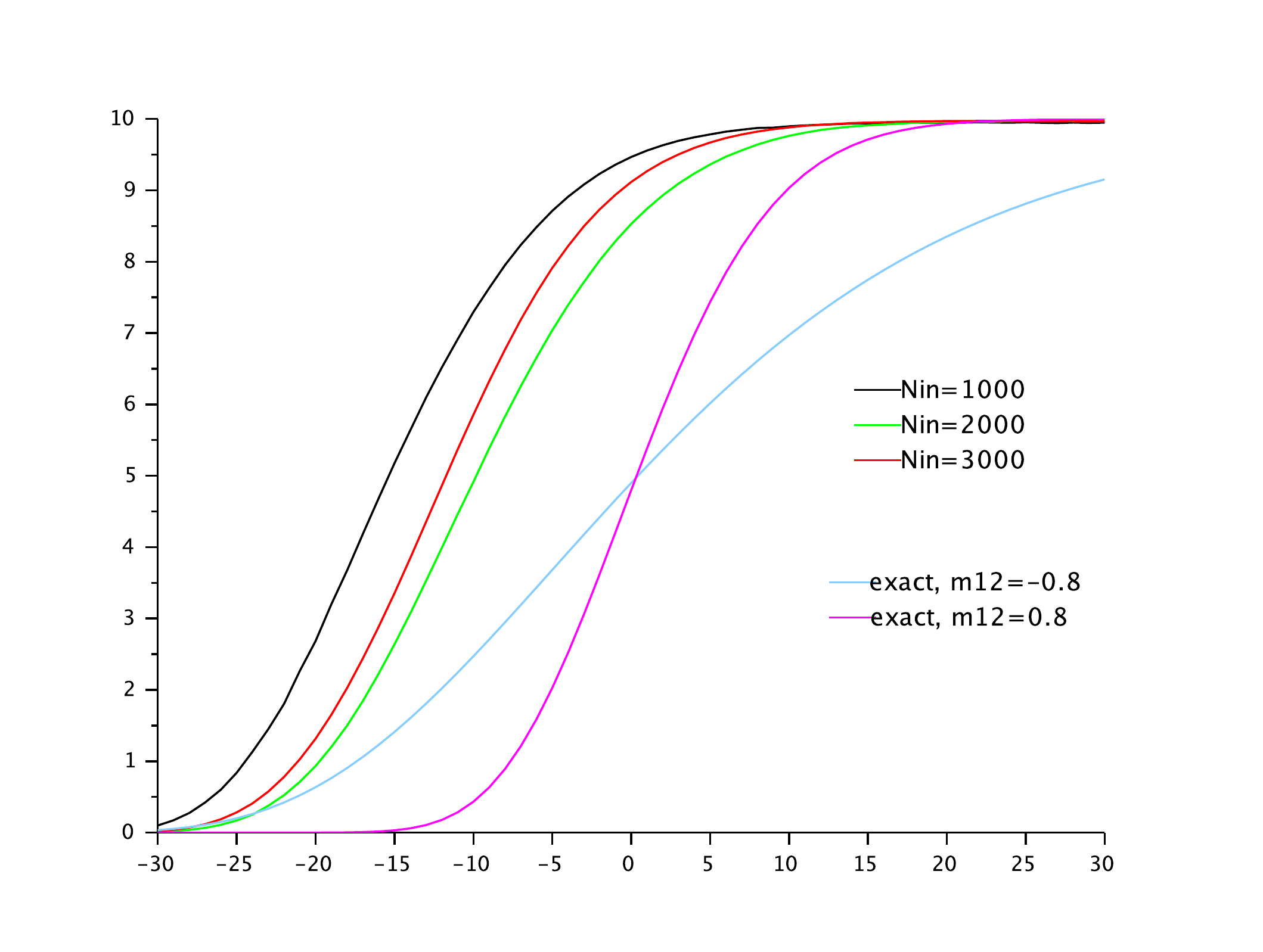}
}{\caption{
Value function in dimension $2$, for $\rho=0.8$, at $x_2=50$ as a function 
of $x_1-x_2$ obtained with $\NX=10$, $\NW=1000$.
On left, the value is shown at each time step multiple of $0.05$
and is obtained for $\Nzero=3000$.
On right, the value at time $t=0$ is compared for
$\Nzero=1000$, $2000$ and $3000$ and with the exact solution
when $\M$ is a singleton.
}\label{fig2}}

\figperso{\includegraphics[scale=0.3]{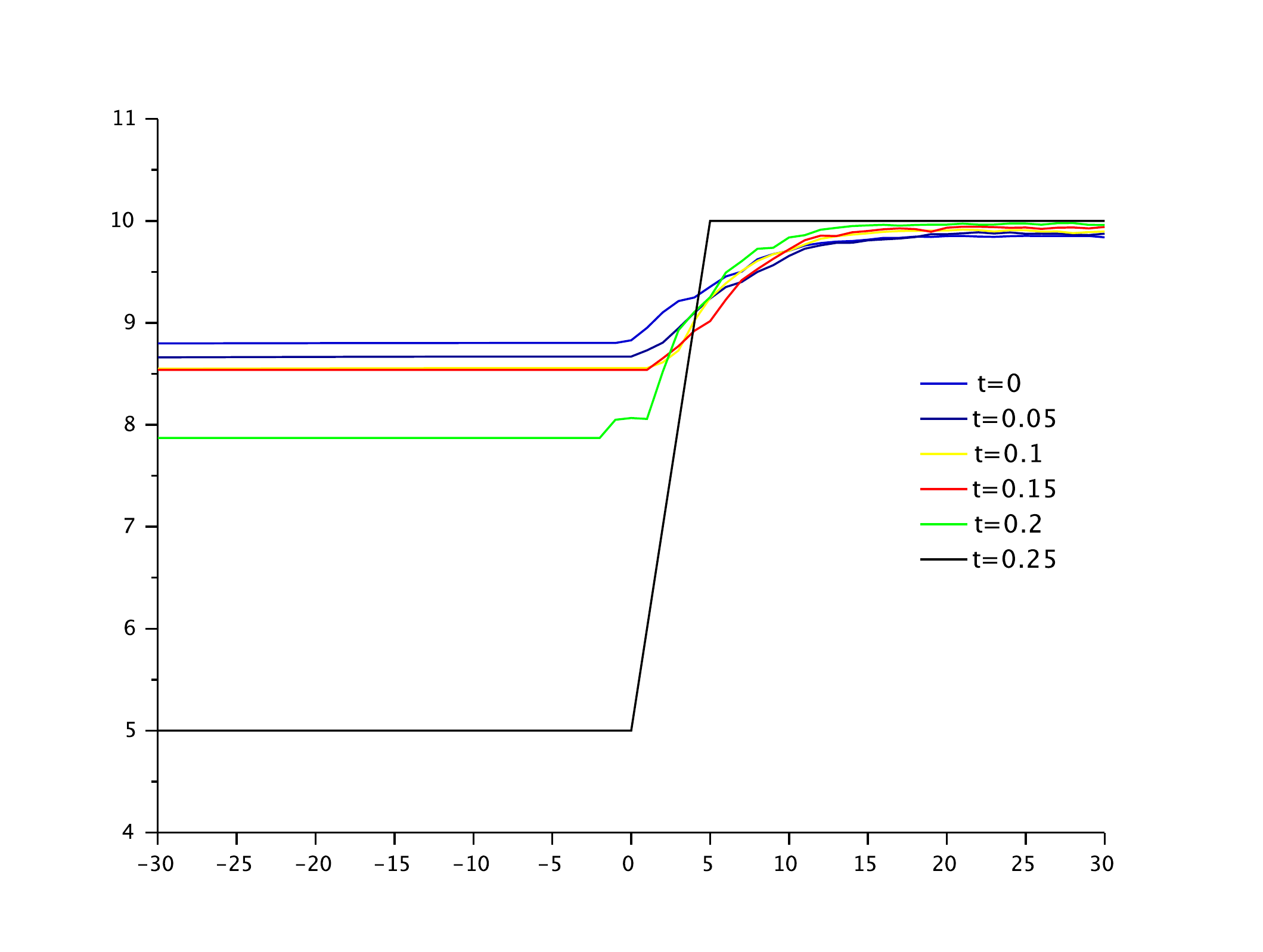}
\hfill \includegraphics[scale=0.3]{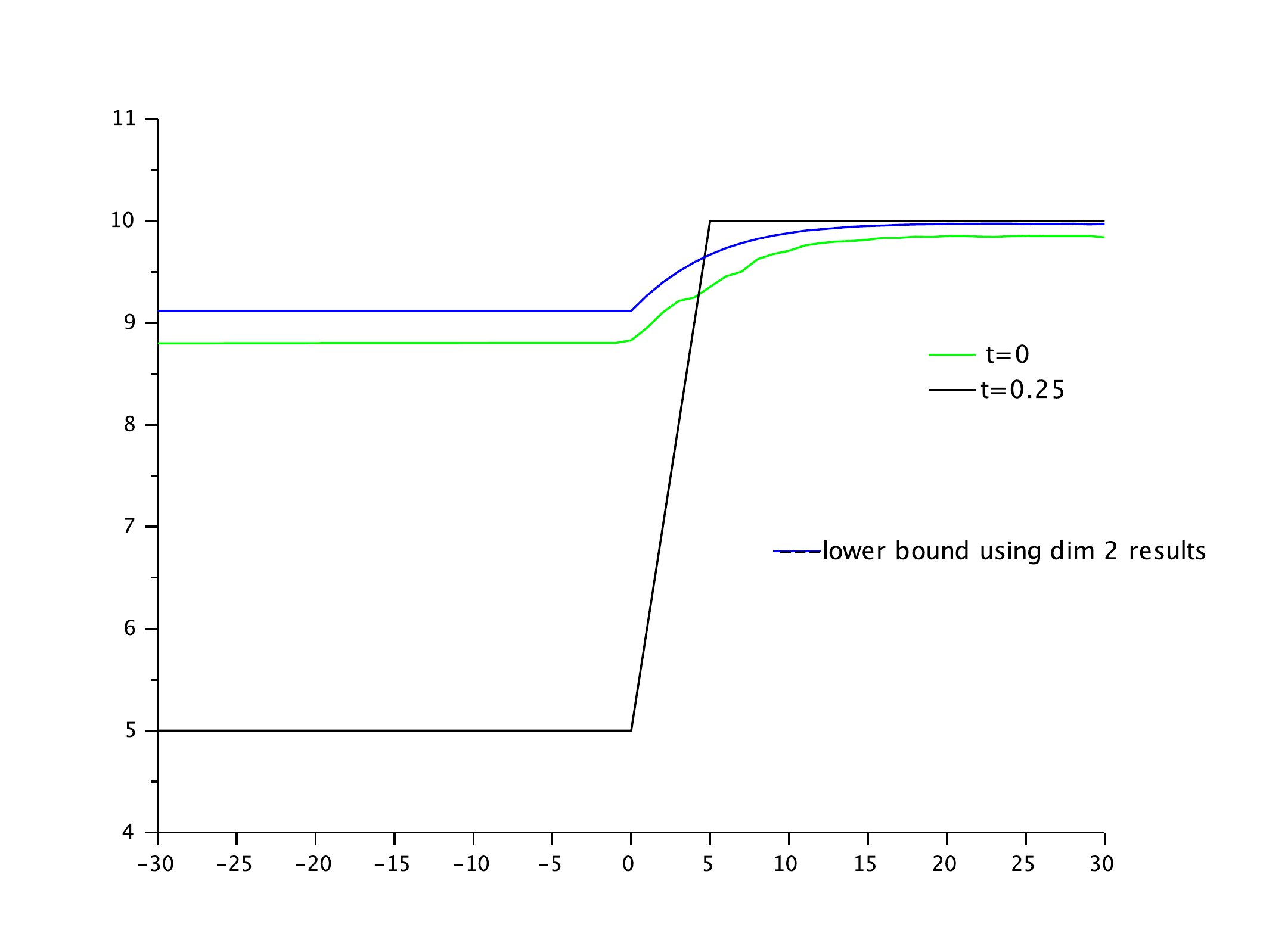}
}{\caption{
Value function for $\rho=0.8$ in dimension $5$, at 
 $x_2=x_3=x_4=x_5=50$ as a function of $x_1-x_2$.
Here $\Nzero=3000$, $\NX=50$, $\NW=1000$.
On left, the value is shown at each time step multiple of $0.05$.
On right, the value at time $t=0$ is compared with a lower bound obtained
by using the results in dimension $2$.
}\label{fig3}}

\bibliography{maxproba}
\bibliographystyle{plain}

\end{document}